\documentclass[11pt]{article}
\usepackage{enumerate}
\usepackage{amssymb,a4wide,latexsym,makeidx,epsfig,fleqn}
\usepackage{amsthm}
\usepackage{amsmath}
\usepackage{enumerate}
\usepackage{extarrows}
\usepackage{graphicx}
\usepackage{subfigure}
\usepackage{float}
\usepackage[justification=centering]{caption}
\usepackage{tikz} 
\usepackage{amsmath}
\usepackage{caption}
\usepackage{mathrsfs}
\usetikzlibrary{positioning}
\usetikzlibrary{decorations.pathreplacing}
\usepackage[numbers,sort&compress]{natbib} 
\newtheorem{theorem}{Theorem}[section]

\newtheorem{lemma}[theorem]{Lemma}

\newtheorem{corollary}[theorem]{Corollary}
\newtheorem{problem}[theorem]{Problem}
\newtheorem{conjecture}[theorem]{Conjecture}

\begin{document}
\textwidth 150mm \textheight 225mm
\title{Spectral extremal problems for fractional $ID$-$[a,b]$-factor-critical graphs
\thanks{Supported by the National Natural Science Foundation of China (Nos. 12271439 and 12001434)}}
\author{Zengzhao Xu$^{a,b}$, Ligong Wang$^{a,b,}$\footnote{Corresponding author.}, Weige Xi$^{c}$\\
{\small $^{a}$ School of Mathematics and Statistics, Northwestern Polytechnical University,}\\
{\small  Xi'an, Shaanxi 710129, P.R. China.}\\
{\small $^b$ Xi'an-Budapest Joint Research Center for Combinatorics, Northwestern
	Polytechnical University,}\\
{\small  Xi'an, Shaanxi 710129, P.R. China.}\\
{\small $^{c}$ College of Science, Northwest A\&F University, Yangling, Shaanxi 712100, China}\\
{\small E-mail: xuzz0130@163.com; lgwangmath@163.com; xiyanxwg@163.com}\\}
\date{}
\maketitle
\begin{center}
\begin{minipage}{120mm}
\vskip 0.3cm
\begin{center}
{\small {\bf Abstract}}
\end{center}
{\small    A factor of a graph is essentially a specific type spanning subgraph. In recent years, the spectral extremal problem of characterizing the existence of graph factors via eigenvalues has been widely studied. This paper focuses on fractional $ID$-$[a, b]$-factor-critical graphs, which are a natural generalization of fractional $[a,b]$-factors. Let $r \ge 1$ be an integer. A graph $G$ is fractional $ID$-$[a, b]$-factor-critical if for every independent set $I$ of $G$ with $|I| = r$, $G - I$ has a fractional $[a, b]$-factor. In 2026, Jia, Fan and Liu posed the spectral version conjecture for a graph to be fractional $ID$-$[a, b]$-factor-critical [Linear Algebra Appl. 732 (2026) 1-17]. In this paper,
	we first prove the conjecture holds for connected graphs when $b\ge 2r+2$. Furthermore, for minimum degree $\delta(G)\ge a+r$, we present spectral radius and size conditions that ensure a graph is fractional $ID$-$[a, b]$-factor-critical, which improve the results of Jia, Fan and Liu.

\vskip 0.1in \noindent {\bf Key Words}: \  Fractional $[a,b]$-factor, Fractional $ID$-$[a, b]$-factor-critical graphs, Spectral radius, Size. \vskip
0.1in \noindent {\bf AMS Subject Classification (2020)}: \ 05C35, 05C50}
\end{minipage}
\end{center}

\section{Introduction } 
\quad Let $G$ be a simple, finite and undirected graph with vertex set $V(G)$ and edge set $E(G)$, respectively. We use $|V (G)| =n$ 
and $|E(G)| =e (G)$ to denote the order and the size of $G$. The complement $\overline{G}$ of $G$ 
is the graph with $V(\overline{G}) = V (G)$ and two distinct vertices in $G$ are adjacent if and only if they 
are non-adjacent in $G$. For a vertex $v_i \in V(G)$,  we use $N_G(v_i)$ to denote its neighborhood, and we set
 $d_G(v_i) = |N_G(v_i)|$ as its degree. In addition, let $N_G[v_i]=N_G(v_i)\cup \{v_i\}$. For a vertex $v_i \in V(G)$ and a vertex subset $S \subseteq V(G)$, let $N_S(v_i) = N_G(v_i) \cap S$ and $d_S(v_i) = |N_S(v_i)|$. We use $K_n$ to denote the complete graph of order $n$. Let $\delta(G)$ be the minimum degree of a graph $G$.  For $U \subseteq V(G)$, we use $G[U]$ to denote the subgraphs of $G$ induced by $U$. For $S_1,S_2\subseteq V(G)$ with  $S_1 \cap S_2 = \emptyset$, let
  $E_G(S_1, S_2)$ be the set of edges in $G$ with one endpoint in $S_1$ and the other in $S_2$, and let $e_G(S_1, S_2) = |E_G(S_1, S_2)|$. Let $G_1 \cup G_2$ be the disjoint union of graphs $G_1$ and $G_2$. The join $ G_1 \vee G_2$ is obtained from $G_1 \cup G_2$ by joining all possible edges between $ V(G_1)$ and $V(G_2)$. For further details and related concepts, we refer readers to \cite{BM}.

For a graph $G$ with vertex set $V(G)$ and edge set $E(G)$, the adjacency matrix of $G$ is defined as the matrix $A(G) = (a_{ij})_{n \times n}$ with $a_{ij} = 1 $ if $v_iv_j\in E(G)$ and $a_{ij}=0$ otherwise. The largest eigenvalue of $A(G)$, denoted by $\lambda(G)$, is called the spectral radius of a graph $G$.

 A factor of a graph is essentially a spanning subgraph, and related research can be traced back to the pioneering work of the Danish mathematician Petersen in 1891. Factor theory is an important branch of graph theory. It focuses on the decomposition of a graph into subgraphs with given structural properties. This theory provides a useful framework for problems such as matching theory and network design. In addition, factor theory plays a significant role in several applied disciplines, including computer science, operations research, and chemical graph theory. Please refer to the monographs \cite{AK,YL} for further
 details.
 
  Let $h \colon E(G) \to [0, 1]$ be
a function defined on the edge set $E(G)$ and let $b\ge a$ be positive integers. For every $v \in V(G)$, if $a \leq \sum_{e \in E_G(v)} h(e) \leq b$, then the spanning subgraph 
with edge set $E_h = \{e \in E(G) \mid h(e) > 0\}$, denoted by $G[E_h]$, is called a fractional $[a, b]$-factor of $G$ with indicator function $h$.

The study of graph factors through eigenvalues has an abundant history. Brouwer and Haemers \cite{BH} characterized the condition for a regular graph to contain a perfect matching in terms of its third-largest eigenvalue. This work is one of the early publications that explored factor existence using eigenvalue methods. Subsequently, various improvements and extensions to this result have been made by researchers (\cite{C,CG,CGH}). O \cite{O3} extended the result of Brouwer and Haemers \cite{BH} to graphs that are not necessarily
regular, and presented a spectral radius condition for a connected graph to contain a 1-factor. Li, Fan and Zhu \cite{LFZ} investigated the existence of fractional $[a, b]$-factors in graphs
from the perspectives of size and  spectral radius, respectively. For more details, see survey \cite{FLLO} and \cite{FLZ,FL,FLL,FLA,G,HL,L2,LFZ,MW,O,O3,O2}.

This paper focuses on fractional $ID$-$[a, b]$-factor-critical graphs, which are a natural generalization of fractional $[a,b]$-factors. Let $r \ge 1$ be an integer. A graph $G$ is fractional $ID$-$[a, b]$-factor-critical if for every independent set $I$ of $G$ with $|I| = r$, $G - I$ has a fractional $[a, b]$-factor. This concept was first introduced by Zhou, Sun and Liu \cite{ZSL}. Since then, several structural conditions have been established to guarantee that a graph is fractional $ID$-$[a, b]$-factor-critical, such as eigenvalue conditions \cite{JFL,MW}, neighborhood conditions \cite{YH,ZY}, and degree conditions \cite{ZSL}.

In 2026, Jia, Fan and Liu \cite{JFL} proposed a problem on characterizing fractional $ID$-$[a, b]$-factor-critical graphs.
\begin{problem}(\cite{JFL})
	What is a sufficient condition in terms of the spectral radius to guarantee the existence of a fractional $ID$-$[a, b]$-factor-critical graph?
\end{problem}
Jia, Fan and Liu \cite{JFL} characterized the size and spectral radius conditions for fractional $ID$-$[a, b]$-factor-critical graphs and posed the following conjecture at the end of the paper.
\begin{theorem}(\cite{JFL})\label{T1.1}
	Let $a$ and $b$ be two positive integers with $a \leq b$. Let $G$ be a connected graph of order $n \geq \frac{75}{2}b^2 + (35a + 5r - \frac{37}{2})b - r^2 + (2a - \frac{3}{2})r + 8a^2 - 8a + 3$ and minimum degree $\delta(G) \geq a + r$, where $b \geq 2r$. If
$e(G) > \frac{1}{2}n^2 - (r + \frac{1}{2})n + r^2 + \frac{(a+1)}{2}r,$
	then $G$ is fractional $ID$-$[a,b]$-factor critical.
\end{theorem}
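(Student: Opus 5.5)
We argue by contradiction, following the usual route for size conditions for fractional factors. Suppose $G$ satisfies the hypotheses of Theorem \ref{T1.1} but is not fractional $ID$-$[a,b]$-factor-critical. Then there is an independent set $I$ with $|I|=r$ such that $H=G-I$ has no fractional $[a,b]$-factor. By the fractional $[a,b]$-factor theorem (Anstee's criterion), there exist disjoint sets $S,T\subseteq V(H)$ with
\begin{equation*}
b|S|-a|T|+\sum_{v\in T}d_{H-S}(v)\le -1 .
\end{equation*}
We may take $T=\{v\in V(H)\setminus S: d_{H-S}(v)<a\}$, so every $v\in T$ has $d_{H-S}(v)\le a-1$. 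Put $s=|S|$ and $t=|T|$. Then $t\ge 1$ and $t > bs/a \ge s$, since the sum is nonnegative.

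Next we upper-bound $e(G)$ by counting non-edges. Each $v\in T$ satisfies $d_G(v)\ge a+r$. In $G$ its neighbours lie in $S\cup I$ together with at most $d_{H-S}(v)$ vertices of $V(H)\setminus S$, and $d_{H-S}(v)\le a-1$. Hence $v$ misses at least $n-1-s-r-d_{H-S}(v)$ vertices. Moreover $I$ is independent, so $\binom r2$ edges are absent. Summing over $T$ and correcting for non-edges counted twice inside $T$ gives
\begin{equation*}
e(G)\le \binom n2-\binom r2-\sum_{v\in T}\bigl(n-1-s-r-d_{H-S}(v)\bigr)+\binom t2 .
\end{equation*}
Using $\sum_{v\in T} d_{H-S}(v)\le at-bs-1$, this becomes
\begin{equation*}
e(G)\le \binom n2-\binom r2-t(n-1-s-r)+at-bs-1+\binom t2 .
\end{equation*}
We then split into cases on $t$.

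\textbf{Case $t$ small.} Say $t\le$ some constant multiple of $b$, or more precisely $t$ bounded in terms of $a,b,r$. The minimum degree condition forces $s\ge \delta-r-(a-1)\ge 1$ whenever $T$ is nonempty. The bound above then behaves essentially as $\binom n2-tn+O(1)$. For $t=1$ this is the extremal configuration, and comparing with $\frac12n^2-(r+\frac12)n+r^2+\frac{a+1}2 r$ gives a contradiction once $n$ is large.

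The extremal structure presumably has the form $K_{r+?}\vee(\ldots)$. A single vertex $v$ of degree exactly $a+r$ is adjacent to $I$ and to $a$ others, and the computation must be done carefully with $s$ and the fixed independent $I$. Here the precise constant $r^2+\frac{a+1}2r$ should arise from optimizing over $s$ and the constraint $b\ge 2r$.

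\textbf{Case $t$ large.} Since $|S|+|T|+r\le n$, the function is concave or convex in $t$. We check the endpoints, namely $t$ near the threshold and $t=n-s-r$, and use $t>bs/a$ to control $s$. At $t$ close to $n$, the fact that $T$-vertices have low degree inside $H-S$ gives $e(G)\le \frac{(a-1)+s+r}{2}n+\binom{s+r}{2}$. This is far below the hypothesis when $s<bs/a<n$ is not too large; otherwise we use $bs<at\le an$.

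The main obstacle is the middle range and the exact constant in the small case. The lower bound on $n$ (quadratic in $b$) is what makes the quadratic term $\binom t2 - t(n-s-r)$ dominate across the whole intermediate range. I would first treat $f(t)$ as a quadratic and show $f(t)\le\max\{f(t_0),f(n-s-r)\}$, then substitute the bound on $s$ coming from $t>bs/a$ together with $s\le n-r-t$.
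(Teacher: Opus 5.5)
\textbf{There is a genuine gap in the small-$t$ case.}

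Your edge bound is correct. It is the paper's Lemma~\ref{le:4}(ii) in another form, since $\binom n2-\binom r2-t(n-1-s-r)+\binom t2=\binom{n-r-t}{2}+r(n-r)+st$.

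The problem is what you extract from the hypotheses: only $s\ge 1$ and $t>bs/a$. These do not exclude $t<r$. For such $t$ your bound is $\binom n2-tn+O(1)$, while the hypothesis only gives $e(G)>\binom n2-rn+O(1)$, so no contradiction can come out.

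A concrete instance: take $r=3$, $a=b=6$, $s=1$, $t=2$. These values satisfy $b\ge 2r$, $s\ge 1$, $t>bs/a$ and $at-bs-1=5\ge 0$. Your bound then equals $\frac12n^2-\frac52n+13$, which exceeds $\frac12n^2-\frac72n+\frac{39}{2}$ by $n-\frac{13}{2}$.

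Your claim that $t=1$ is ``the extremal configuration'' and yields a contradiction for large $n$ is also mistaken. The value $t=1$ is already impossible, since $t>b/a\ge 1$. For $2\le t<r$ the comparison goes the wrong way.

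\textbf{The missing idea} is a second use of $\delta(G)\ge a+r$, which is Lemma~\ref{le:4}(i). A vertex $v\in T$ has all its neighbours in $I\cup S\cup N_{H-S}(v)$, so $d_{H-S}(v)\ge a-s$. Summing over $T$ gives $(a-s)t\le at-bs-1$, i.e.\ $st\ge bs+1$, hence $t\ge b+1\ge 2r+1$. With this in hand your outline becomes essentially the paper's argument.

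The paper does not reprove this statement; it quotes it from the reference and recovers it through Theorem~\ref{T3}. That proof splits into three ranges:
\begin{itemize}
\item $t=b+1$: here $bs\le a(b+1)-1$ forces $s\le a$, so $e(G)\le\binom{n-r-b-1}{2}+r(n-r)+a(b+2)-1$.
\item $b+2\le t\le n/2$: substitute $s\le t-1$ and check a concave quadratic at the endpoints.
\item $t\ge (n+1)/2$: substitute $s\le n-r-t$.
\end{itemize}
The resulting threshold lies below $\frac12n^2-(r+\frac12)n+r^2+\frac{a+1}{2}r$ under the stated order hypothesis. So the constant $r^2+\frac{a+1}{2}r$ does not need to ``arise'' from any optimization over $s$. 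Also, the middle range is not what requires $n$ quadratic in $b$: Theorem~\ref{T3} only needs $n\ge 4(a+b+r+1)$.

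A minor slip: in your large-$t$ estimate, the edges between $S\cup I$ and $T$ contribute up to $(s+r)t$, not half of that. This is harmless, since $(s+r)(n-s-r)\le n^2/4$.
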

\begin{theorem}(\cite{JFL})\label{T1.2}
	Let $a$, $b$ be two positive integers with $a \leq b$. Let $G$ be a connected graph of order $n \geq \frac{75}{2}b^2 + (35a + 5r - \frac{37}{2})b - r^2 + (2a - \frac{3}{2})r + 8a^2 - 8a + 3$ and minimum degree $\delta(G) \geq a + r$, where $b \geq 2r$. If
	$\lambda(G) > n - r - 1,$
	then $G$ is fractional $ID$-$[a,b]$-factor critical.
\end{theorem}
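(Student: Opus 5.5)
We derive Theorem~\ref{T1.2} from the size result Theorem~\ref{T1.1}, with a structural analysis only where the edge count is not large enough. Throughout we use the standard characterization of fractional $[a,b]$-factors (Anstee; Liu--Zhang): a graph $H$ has a fractional $[a,b]$-factor if and only if for every $S\subseteq V(H)$, with $T=\{v\in V(H)\setminus S: d_{H-S}(v)<a\}$,
\[
\delta_H(S,T)=b|S|-a|T|+\sum_{v\in T}d_{H-S}(v)\ge 0 .
\]

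\textbf{Setup.} Suppose $G$ satisfies the hypotheses and $\lambda(G)>n-r-1$, but $G$ is not fractional $ID$-$[a,b]$-factor-critical. Then there is an independent set $I$ with $|I|=r$ and a set $S\subseteq V(G)\setminus I$ such that $\delta_{G-I}(S,T)\le -1$. Put $s=|S|$ and $t=|T|$. The minimum degree hypothesis gives $d_{G-I-S}(v)\ge a+r-r-s$ for every $v$, so $T=\emptyset$ forces a contradiction immediately. Hence $t\ge 1$, and the violated inequality reads
\[
bs+\sum_{v\in T}d_{G-I-S}(v)\le at-1 .
\]
This already forces $t> bs/a$.

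\textbf{Case $s$ small.} Then $G$ is a spanning subgraph of a graph $H$ in which $I\cup S$ is joined to everything, $T$ has bounded degree inside $V\setminus(I\cup S)$, and the rest is complete. Bounding $\lambda$ by the largest eigenvalue of the quotient matrix of an equitable partition of $H$ (or using $\lambda\le\sqrt{2e(G)-n+1}$ together with the extremal edge count), we want $\lambda(G)\le n-r-1$.

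\textbf{Case $s$ large.} Here we compare edge counts. Since each $v\in T$ has degree less than $a+r+s$, we get
\[
e(G)\le \binom{n}{2}-\tfrac{1}{2}\,t\,(n-a-r-s)+\binom{r}{2},
\]
the last term accounting for the missing edges inside $I$. With $t>bs/a$ and $n$ large, this is at most $\tfrac12n^2-(r+\tfrac12)n+r^2+\tfrac{a+1}{2}r$, so Theorem~\ref{T1.1} gives the contradiction. Theorem~\ref{T1.1} itself is taken as the known size version. To get the spectral statement we invoke Hong's bound $\lambda\le\sqrt{2e-n+1}$: if $\lambda>n-r-1$ then $2e(G)>(n-r-1)^2+n-1$. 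This is generally weaker than the size threshold, so the case analysis is needed rather than a one-line reduction.

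\textbf{Main obstacle.} The delicate case is $t$ small, e.g. $t=1$ or $t$ near $r$, with $s=0$. Then the extremal graph is essentially $K_r\vee(K_{n-r-1}\cup K_1)$-like, and we must show its spectral radius is at most $n-r-1$ while $\delta\ge a+r$ holds. For this we will compare $\lambda(G)$ with the quotient-matrix spectral radius of $K_{r+s}\vee(K_{n-r-s-t}\cup tK_1)$-type graphs. The step is to show the characteristic polynomial of the $3\times 3$ quotient matrix is positive at $x=n-r-1$ for all admissible $(s,t)$, using $n\ge\frac{75}{2}b^2+\cdots$ and $b\ge 2r$; that computation is where we expect the real difficulty.
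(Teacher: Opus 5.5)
\textbf{There is a genuine gap.} The one place where $\delta(G)\ge a+r$ enters is to force $T$ to be large, and you never extract this.

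\begin{itemize}
\item[(1)] Since $|I|=r$, every vertex outside $I$ has $d_{G-I}(v)\ge a$. So $s=0$ gives $T=\emptyset$.
\item[(2)] For $s\ge 1$, sum the bound $d_{G-I-S}(v)\ge a-s$ (which you wrote down) over $T$ and compare with the violated inequality. This gives $(a-s)t\le at-bs-1$, i.e.\ $st\ge bs+1$, hence $t\ge b+1\ge 2r+1$. This is Lemma~\ref{le:4}(i) of the paper.
\end{itemize}

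Consequently the case you call the main obstacle, $s=0$ with $t$ small, is vacuous. Your plan for it also could not succeed. Graphs such as $K_r\vee(K_{n-r-1}\cup K_1)$, or $K_{r+s}\vee(K_{n-r-s-t}\cup tK_1)$ with $t\le r$, contain $K_{n-t}$. Their spectral radius therefore exceeds $n-t-1\ge n-r-1$, so the inequality you hope to verify at $x=n-r-1$ is false for them. Such configurations are excluded only by the minimum-degree hypothesis, never by a characteristic-polynomial estimate.

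The ``$s$ large'' step is logically backwards. Showing that $e(G)$ is at most the threshold of Theorem~\ref{T1.1} contradicts nothing, since Theorem~\ref{T1.1} only says that graphs \emph{above} it are factor-critical. And, as you note yourself, Hong's bound applied to $\lambda(G)>n-r-1$ does not reach that threshold, so Theorem~\ref{T1.1} cannot be used as a black box.

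The ``$s$ small'' case is only a statement of intent. Also, the $\binom{r}{2}$ in your edge bound should be subtracted, not added.

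\textbf{How to repair it.} The paper does not reprove this theorem. It quotes it from \cite{JFL} and remarks that, in the parameter range of Theorem~\ref{T2}, that theorem improves it via $\lambda(F_n^{a,b,r})<n-b-1\le n-r-1$. However, once you have $t\ge b+1$, the paper's Lemma~\ref{le:4}(ii) together with Hong's bound (Lemma~\ref{le:3}) finishes directly. No case split, quotient matrix or appeal to Theorem~\ref{T1.1} is needed.

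Write $m=n-r$. Lemma~\ref{le:4}(ii) gives
\[
(n-r-1)^2-\bigl(2e(G)-n+1\bigr)\ge 2(t-r)m-t^2-(2a+1)t-2s(t-b)+r+2 .
\]
Since $t>b$ and $s\le m-t$ (because $S$, $T$, $I$ are disjoint), the right-hand side is at least
\[
2(b-r)m+t^2-(2a+2b+1)t+r+2\ \ge\ 2m-\tfrac{(2a+2b+1)^2}{4}+r+2 .
\]
This uses $b-r\ge r\ge 1$, which follows from $b\ge 2r$. The last expression is positive for $n$ in the stated range. Hence $\lambda(G)\le\sqrt{2e(G)-n+1}\le n-r-1$, which is the desired contradiction.
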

\begin{conjecture}\label{c1}(\cite{JFL})
	Let $a, b$ be two positive integers with $a \leq b$, and let $G$ be a graph of order $n \geq a + r + 1$, where $r \geq 1$ is a given integer. If
	$\lambda(G) \geq \lambda\left(I_r \vee \left(K_{a-1} \vee \left(K_{n-a-r} \cup K_1\right)\right)\right),$
	then $G$ is fractional $ID$-$[a,b]$-factor critical unless $G \cong I_r \vee \left(K_{a-1} \vee \left(K_{n-a-r} \cup K_1\right)\right)$.
\end{conjecture}
Motivated by \cite{JFL}, we study spectral radius and size conditions for a graph to be fractional $ID$-$[a,b]$-factor critical. We first introduce some important theorems. 
In \cite{A}, Anstee characterized the necessary and sufficient conditions for the existence of a fractional $[a,b]$-factor in a graph. The following lemma can be derived from \cite{A}.
\begin{theorem}(\cite{A})\label{T}
	Let $G$ be a graph and let $a, b$ be two positive integers with $a \leq b$. Then $G$ has a fractional $[a,b]$-factor if and only if for any subset $S \subseteq V(G)$, we have
	$b|S| - a|T| + \sum_{v \in T} d_{G-S}(v) \geq 0,$
	where $T = \{v \mid v \in V(G) - S \text{ and } d_{G-S}(v) < a\}$.
\end{theorem}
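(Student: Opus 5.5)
The plan is to reduce the existence of a fractional $[a,b]$-factor to a feasible flow problem on an auxiliary bipartite network. Then Hoffman's circulation theorem (equivalently max-flow min-cut with lower bounds) gives a cut condition, and we optimise that condition until it reads $b|S|-a|T|+\sum_{v\in T}d_{G-S}(v)\ge 0$.

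\emph{Necessity.} This direction is a direct count. Let $h$ be an indicator function of a fractional $[a,b]$-factor, take $S\subseteq V(G)$, and let $T$ be as in the statement. Summing the lower bound over $T$ gives
\[
a|T|\le \sum_{v\in T}\sum_{e\in E_G(v)}h(e)\le \sum_{v\in T}d_{G-S}(v)+e_G(T,S),
\]
since each edge has weight at most $1$. The upper bound at the vertices of $S$ gives
\[
e_G(T,S)\le \sum_{u\in S}\sum_{e\in E_G(u)}h(e)\le b|S|.
\]
Combining the two inequalities yields the condition.

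\emph{Sufficiency.} First build the bipartite double $B$. Its vertex classes are $V'=\{v':v\in V(G)\}$ and $V''=\{v'':v\in V(G)\}$, and it has an arc $u'\to v''$ of capacity $1$ for each ordered pair with $uv\in E(G)$. Add a source $s$ with arcs $s\to v'$ and a sink $t$ with arcs $v''\to t$, and give each of these arcs lower bound $a$ and capacity $b$. A feasible $s$--$t$ flow $f$ yields $h(uv)=\tfrac12\bigl(f(u'v'')+f(v'u'')\bigr)\in[0,1]$. Then $\sum_{e\in E_G(v)}h(e)$ is the average of the outflow at $v'$ and the inflow at $v''$, so it lies in $[a,b]$. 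Hence it suffices to show that a feasible flow exists.

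By Hoffman's theorem, a feasible flow exists iff for every $s$--$t$ cut the total lower bound on arcs crossing backwards is at most the total capacity forward (after adding a return arc $t\to s$). Write the cut in terms of $A=\{v: v'\text{ on the sink side}\}$ and $C=\{v: v''\text{ on the source side}\}$. The condition then becomes
\[
a|A|+a|C|\le b|V\setminus A|+b|V\setminus C|+e_B(V\setminus A,\,V\setminus C),
\]
or the analogous expression with the roles of $a$ and $b$ arranged by arc direction. The main obstacle is converting this two-set condition into the single-set condition of Theorem~\ref{T}. I would do this in two steps. First, split the condition into one inequality for the $V'$-side and one for the $V''$-side; by the symmetry of $B$ each is an instance of a condition in one pair $(S,T)$ with $S\cap T=\emptyset$. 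Here $S$ collects vertices whose arcs are cut at capacity $b$, $T$ collects vertices charged lower bound $a$, and the edge term counts edges from $T$ into $V\setminus S$. Second, for fixed $S$, the expression $b|S|-a|T|+\sum_{v\in T}d_{G-S}(v)$ is minimised by putting into $T$ exactly the vertices with $d_{G-S}(v)<a$, because each vertex contributes $d_{G-S}(v)-a$ independently. So the hypothesis for this particular $T$ implies the inequality for every disjoint $T$, and every Hoffman cut inequality holds. The care needed is in checking that the two halves of a general cut (with $A\ne C$) add up to at least the sum of two single-set inequalities, using $e_B(X,Y)\ge$ the corresponding one-sided degree counts. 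I expect this bookkeeping to be the only delicate step.
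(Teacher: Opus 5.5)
The paper does not prove this statement; it quotes it from Anstee~\cite{A}. Your framework (bipartite double, Hoffman's theorem, then minimizing over $T$) is viable, and your conversion of a feasible flow into $h$ is correct. However, there is a slip in the necessity direction and a genuine gap in sufficiency. In necessity, the inequality $e_G(T,S)\le\sum_{u\in S}\sum_{e\in E_G(u)}h(e)$ is false in general: $e_G(T,S)$ counts edges, while their $h$-weights may be far below $1$. Keep $\sum_{e\in E_G(T,S)}h(e)$ in the first chain rather than bounding it by $e_G(T,S)$, and bound that weighted sum by $b|S|$.

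In sufficiency, the criterion you quote is wrong for this network. Every arc with positive lower bound leaves $s$ or enters $t$, so no such arc crosses from the $t$-side to the $s$-side of an $s$--$t$ cut. Hence all $s$--$t$ cut conditions hold trivially for every $G$. Your displayed inequality is not the right one either: the cut with $A=C=V(G)$ turns it into $2a|V(G)|\le 0$.

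With the return arc, Hoffman's condition must be checked for every vertex set $X$ of the network, and the nontrivial sets are those containing both $s$ and $t$ or neither. For $X$ containing neither, put $S=\{v:v''\in X\}$ and $A=\{v:v'\in X\}$; the condition reads
\[
b|S|-a|A|+\sum_{v\in A}|N_G(v)\setminus S|\ge 0 .
\]
Sets $X\supseteq\{s,t\}$ give the same family via $S=\{v:v'\notin X\}$ and $A=\{v:v''\notin X\}$. So no splitting into halves is needed. The real issue is that $A$ is an arbitrary subset of $V(G)$ and may meet $S$, whereas the hypothesis only controls $T\subseteq V(G)\setminus S$. Your outline simply asserts $S\cap T=\emptyset$ at exactly this point.

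To close the gap, set $A_2=A\cap S$, $A_1=A\setminus S$ and $S'=S\setminus A_2$. Your minimization remark, applied to $S'$ and the set $A_1\subseteq V(G)\setminus S'$, gives $b|S'|-a|A_1|+\sum_{v\in A_1}d_{G-S}(v)+e_G(A_1,A_2)\ge 0$. Since $\sum_{v\in A_2}|N_G(v)\setminus S|\ge e_G(A_1,A_2)$, the left side of the displayed inequality is at least this quantity plus $(b-a)|A_2|\ge 0$. This is the step where $a\le b$ genuinely enters the cut analysis.
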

The following corollary is a direct consequence of Theorem \ref{T}.
\begin{corollary}\label{co1}(\cite{JFL})
	Let $G$ be a graph and let $a, b$ be two positive integers with $a \leq b$. Then $G$ is fractional $ID$-$[a,b]$-factor critical if and only if for any independent set $I$ and subset $S \subseteq V(G - I)$, we have
$	b|S| - a|T| + \sum_{v \in T} d_{G-I-S}(v) \geq 0,
$ where $T = \{v \mid v \in V(G) \setminus (S\cup I) \text{ and } d_{G-I-S}(v) \le a-1\}$.
\end{corollary}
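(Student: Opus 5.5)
The plan is to apply Theorem \ref{T} to the graph $H=G-I$, one independent set at a time. Corollary \ref{co1} is then a pointwise translation of Anstee's criterion, so the argument is essentially bookkeeping: unpack the definition, check that the sets and degrees in Theorem \ref{T} for $H$ coincide with those in the corollary, and re-attach the quantifier over $I$.

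First, I unpack the definition. $G$ is fractional $ID$-$[a,b]$-factor critical exactly when, for every independent set $I$ over which the definition quantifies, the graph $H=G-I$ has a fractional $[a,b]$-factor. The corollary quantifies over the same independent sets $I$, so I keep that quantifier untouched and treat it as an outer ``for all $I$''. It then suffices to prove the following for each fixed independent set $I$: $G-I$ has a fractional $[a,b]$-factor if and only if the stated inequality holds for every $S\subseteq V(G-I)$.

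Next, I fix such an $I$ and apply Theorem \ref{T} to $H=G-I$ with the same positive integers $a\le b$. That theorem says $H$ has a fractional $[a,b]$-factor if and only if, for every $S\subseteq V(H)$,
\[
b|S|-a|T_H|+\sum_{v\in T_H} d_{H-S}(v)\ge 0,
\]
where $T_H=\{v\in V(H)-S : d_{H-S}(v)<a\}$. The translation rests on three identities:
\begin{enumerate}[(i)]
\item $V(H)=V(G)\setminus I$, so the subsets $S\subseteq V(H)$ are exactly the subsets $S\subseteq V(G-I)$;
\item $V(H)-S=V(G)\setminus(S\cup I)$ and $H-S=G-I-S$, so $d_{H-S}(v)=d_{G-I-S}(v)$;
\item since degrees are integers, $d_{H-S}(v)<a$ is equivalent to $d_{G-I-S}(v)\le a-1$.
\end{enumerate}
Together these give $T_H=T$, with $T$ as defined in the corollary, and the inequality above becomes exactly $b|S|-a|T|+\sum_{v\in T}d_{G-I-S}(v)\ge 0$.

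Finally, I quantify over all the independent sets $I$ to obtain both directions of the equivalence. There is no genuine obstacle. The only point requiring care is that the range of $I$ in the corollary must match the range of $I$ in the definition of fractional $ID$-$[a,b]$-factor criticality; I will state explicitly that the two quantifiers range over the same family. Apart from that, the proof consists of checking identity (iii), the replacement of ``$<a$'' by ``$\le a-1$''.
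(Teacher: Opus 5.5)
Your proposal is correct and follows the paper's route: the paper gives no separate argument beyond calling the corollary a direct consequence of Theorem~\ref{T}, which is exactly your application of Anstee's criterion to $H=G-I$, with $T_H=T$ via $d<a\iff d\le a-1$. You are also right to flag the quantifier: the statement's ``any independent set $I$'' must be read as ``any independent set $I$ with $|I|=r$'', as the paper uses it later. Read literally it is false; for example, $C_4$ with $r=1$, $a=1$, $b=2$ is fractional $ID$-$[1,2]$-factor critical, yet deleting two opposite vertices leaves $2K_1$, which has no fractional $[1,2]$-factor.
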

Let $H_{n}^{a,r}=rK_1\vee (K_{a-1}\vee (K_{n-a-r}\cup K_1)).$ For Conjecture \ref{c1}, we prove the following.
\noindent\begin{theorem}\label{T1}  \  Let $a, r \ge 1$ and $b \ge \max\{a, 2r+2\}$ be integers. Let $G$ be a connected graph of order $n \geq (r+1)(b-r)+2a^2+2a(r+2)-1$. If
	$\lambda(G) \geq \lambda(H_{n}^{a,r}),$
	then $G$ is fractional $ID$-$[a, b]$-factor critical unless $G \cong H_{n}^{a,r}$.
\end{theorem}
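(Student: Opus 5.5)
We argue by contradiction. Suppose $G$ is connected, satisfies the order bound, has $\lambda(G)\ge\lambda(H_n^{a,r})$, and is not fractional $ID$-$[a,b]$-factor critical. By Corollary \ref{co1} there is an independent set $I$ with $|I|=r$ and a set $S\subseteq V(G-I)$ such that, with $T$ defined as there,
\[
b|S|-a|T|+\sum_{v\in T} d_{G-I-S}(v)\le -1 .
\]
Put $s=|S|$ and $t=|T|$. Then $T\neq\emptyset$, and because $d_{G-I-S}(v)\ge 0$ we get $at\ge bs+1$, so $t\ge \frac{bs+1}{a}>s$ whenever $b\ge a$. The first step is to enlarge $G$ without decreasing $\lambda$ and without destroying the violating configuration. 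We add every edge inside $S\cup(V\setminus(I\cup S\cup T))$, every edge between $I$ and $V\setminus I$, and every edge from $S$ to $T$. None of these changes $d_{G-I-S}(v)$ for $v\in T$. Edges from $T$ to $R=V\setminus(I\cup S\cup T)$, and edges inside $T$, do change the sum, so we leave them alone. The resulting graph $G'$ contains $G$, so $\lambda(G)\le\lambda(G')$ by Perron--Frobenius, with strict inequality unless $G=G'$. The spectral radius of $G'$ is bounded by that of a graph of the form $K_r^{c}\vee\bigl(K_s\vee(K_{n-r-s-t}\cup H_T)\bigr)$, where $H_T$ is a graph on $T$ with bounded degrees into $T\cup R$ (each $v\in T$ has $d_{G-I-S}(v)\le a-1$).

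Next we split by the size of $s$ and $t$. In the main case, $t$ and $s$ are small. The extremal configuration is $s=a-1$ and $t=1$, where the single vertex of $T$ has degree $0$ in $G-I-S$; this is exactly $H_n^{a,r}$, and there $b(a-1)-a<0$ indeed holds. For other small parameters with $(s,t)\neq(a-1,1)$, we compare $\lambda(G')$ with $\lambda(H_n^{a,r})$ using the standard quotient-matrix and equitable-partition technique. We write the characteristic polynomial of the quotient matrix of each candidate extremal graph. We then show that the difference of polynomials is positive on $[\lambda(H_n^{a,r}),\infty)$, or we use Kelmans-type edge moving: shifting the neighbours of $T$ onto $S$ increases $\lambda$. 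We also expect to use the fact that $\lambda(H_n^{a,r})>n-r-1$, which holds since $H_n^{a,r}$ contains $K_r^{c}\vee K_{n-r-1}$ together with extra edges.

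In the complementary case, $t$ is large. Then $e(G')$ is small, and we use the crude bound $\lambda\le\sqrt{2e-n+1}$ (Hong's bound, valid for connected graphs) or a bound of Hong--Shu--Fang type involving $\delta$. The violating inequality forces many missing edges: at least about $t(n-r-s-t)$ non-edges between $T$ and $R$, minus $(a-1)t$, together with the non-edges within $T$. Combining this with $n\ge (r+1)(b-r)+2a^2+2a(r+2)-1$ should give $\lambda(G)<n-r-1<\lambda(H_n^{a,r})$. The hypothesis $b\ge 2r+2$ is presumably what handles the intermediate range where $s$ is comparable to $r$. There the vertices of $I$ behave like extra members of $S$, and we need $bs\ge$ roughly $(r+1)(s+1)$-type counts so that the edge deficit exceeds the $r$ "free" dominating vertices.

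The main obstacle is the middle range. There $t$ is moderately larger than $s$ with $s\ge a$, and neither the explicit polynomial comparison nor the crude edge bound is obviously sufficient. First we will try to reduce to $t\le s+$ const via $at\ge bs+1$. Then we compare $\lambda(G')$ directly with $n-r-1$ using the Rayleigh quotient on the three-part quotient matrix for $I\cup S$, $R$ and $T$, and check that the cubic's largest root falls below $n-r-1$ once $n$ exceeds the stated bound. Finally we verify the equality case: $\lambda(G)=\lambda(H_n^{a,r})$ forces $G=G'\cong H_n^{a,r}$.
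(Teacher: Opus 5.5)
Your plan has a genuine gap, and it starts with a misidentified extremal configuration. In $H_n^{a,r}$, take $S=V(K_{a-1})$ and $T=\{v\}$, with $v$ the vertex of the $K_1$. This gives $b(a-1)-a\ge a(a-2)\ge 0$ for every $a\ge 2$ (as $b\ge a$), so this pair is not a violating configuration. The actual witness is $S=\emptyset$, $T=\{v\}$: here $d_{G-I}(v)=a-1$ gives $-a+(a-1)=-1$. So $H_n^{a,r}$ fails only because of one vertex of small degree, and this is the dichotomy your proof needs but does not have.

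If some $u\notin I$ has $d_{G-I}(u)\le a-1$, then $G$ is a spanning subgraph of $H_n^{a,r}$: send $I$ to $rK_1$, $u$ to the $K_1$, and $N_{G-I}(u)$ into $K_{a-1}$. Lemma \ref{le:2} then gives $\lambda(G)\le\lambda(H_n^{a,r})$, with equality only for $G\cong H_n^{a,r}$. Otherwise $d_{G-I-S}(v)\ge a-s$ for all $v\in T$, and the violating inequality forces $(a-s)t\le at-bs-1$. Hence $s\ge 1$ and $t\ge b+1$ (Lemma \ref{le:4}). The paper phrases this as the split $\delta(G)\le a+r-1$ versus $\delta(G)\ge a+r$, after making every vertex of $I$ adjacent to all of $V(G)\setminus I$.

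Consequently your ``main case'' of small $s,t$ is either trivial or empty. You intend to settle it by quotient-matrix comparisons over candidate graphs that are never specified. Your reduction of $G'$ to the form $K_r^{c}\vee(K_s\vee(K_{n-r-s-t}\cup H_T))$ is also unjustified, since the $T$--$R$ edges remain.

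Once $t\ge b+1$ is known, the rest is pure edge counting, with no characteristic polynomials.
\begin{itemize}
\item From $\lambda(G)>n-r-2$ and Hong's bound, $e(\overline{G})\le (r+1)(n-\frac{r+3}{2})$.
\item Hong's bound together with Lemma \ref{le:4}(ii) gives $t\le 2(a+r+1)$. This is where the upper bound on $t$ comes from. Your idea of getting $t\le s+O(1)$ from $at\ge bs+1$ runs the wrong way, since that inequality bounds $t$ only from below.
\item Finally, $\sum_{v\in T}d_{G-I-S}(v)\ge (n-r-s-1)t-2e(\overline{G})$. Combine this with $t\ge b+1$, $s\le t-1$, $t\le 2(a+r+1)$ and $b\ge 2r+2$. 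For $n\ge (r+1)(b-r)+2a^2+2a(r+2)-1$ this yields $bs-at+\sum_{v\in T}d_{G-I-S}(v)\ge 0$, contradicting the violating inequality.
\end{itemize}
In the paper, the role of $b\ge 2r+2$ is simply to make the coefficient $b-2r-1$ of $n$ at least $1$; it is not an ``$s$ comparable to $r$'' phenomenon. The ``middle range'' you flag as the main obstacle and leave open is exactly what this counting covers, but only once the lower bound $t\ge b+1$ is in hand.
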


Let $F_{n}^{a,b,r}$ be a graph obtained from
$rK_1\vee (K_{a} \vee ( K_{n-(a+b+r+1)} \cup (b+1)K_1 ))$ by adding $a-1$ edges between one vertex in $V((b+1)K_1)$ and $a-1$ vertices in $V(K_{n-(a+b+r+1)})$, where $F_{n}^{a,b,r}$ is shown in Figure 1.
For $\delta(G)\ge a+r$, we obtain some results, which improve the results of \cite{JFL}.
\begin{figure}[htbp]
	\centering
	\begin{minipage}[c]{0.45\textwidth}
		\centering
		\begin{tikzpicture}[scale =1.2]
			\node[circle,fill=black,draw=black,inner sep=2.2pt] (v1) at (-0.2,0) {};
			\node[circle,fill=black,draw=black,inner sep=2.2pt] (v6) at (0.8,0) {};
			
			\node[circle,fill=black,draw=black,inner sep=2.2pt] (v1.1) at (-1.6,0) {};
			\node[circle,fill=black,draw=black,inner sep=2.2pt] (v1.2) at (-2.6,0) {};
			
			\node[circle,fill=black,draw=black,inner sep=2.2pt] (v2) at (-0.2,2.0) {};
			\node[circle,fill=black,draw=black,inner sep=2.2pt] (v5) at (0.8,2.0) {};
			
			\node[circle,fill=black,draw=black,inner sep=2.2pt] (v7) at (-1.6,2.0) {};
			\node[circle,fill=black,draw=black,inner sep=2.2pt] (v8) at (-2.6,2.0) {};
			\node[circle,fill=black,draw=black,inner sep=1pt] (v9) at (-2.1,2.0)  {};
			
			\node (v10) at (-2.1,-0.3) {};
			\node (v11) at (1.0,2.4) {};
			\node (v13) at (-2.6,2.4) {};
			\node (v14) at (0.5,2.) {};
			\node (v15) at (0.3,2.1) {};
			\node (v16) at (0.1,2.1) {};
			\node (v17) at (-0.8,2.8) {};
			\node (v18) at (0.5,-0.3) {};
			
			\node (v9) at (-2.1,2.0) {};
			\node (v20) at (0.5,2.0) {};
			\node (v21) at (-2.1,0) {};
			\node (v22) at (0.5,0) {};

			\node[below] at (v1) { };
			\node[left] at (v2) { };
			\node[left] at (v5) { };
			\node[below] at (v6) { };
			\node[right] at (v7) { };
			\node[right] at (v8) { };
			\node[below] at (v10) {\large$I=rK_{1}$};  
			\node[below] at (v18) {\large$K_{a}$};  
			\node[overlay] at (v11) {\large$K_{n-(a+r+b+1)}$};   
			\node[overlay] at (v13) {\large$(b+1)K_1$};  
			\node[overlay] at (v17) {\large$a-1$};   
			\draw [line width=1.2pt](v9) -- (v21);
			\draw [line width=1.2pt](v9) -- (v22);
			\draw [line width=1.2pt](v20) -- (v21);
			\draw[line width=1.2pt] (v20) -- (v22);
			\draw[line width=1.2pt] (v21) to[bend right=30] (v22);  
			\draw [line width=1.2pt](0.3,2.0) ellipse (0.65cm and 0.17cm);
			\draw[line width=1.2pt] (v9) to[bend left=40] (v14);  
			\draw[line width=1.2pt] (v9) to[bend left=35] (v15);  
			\draw[line width=1.2pt] (v9) to[bend left=30] (v16);  
			
			\fill (0.1,0) circle (1pt); 
			\fill (0.3,0) circle (1pt); 
			\fill (0.5,0) circle (1pt); 
			\draw [line width=1.2pt](0.3,0) ellipse (0.65cm and 0.17cm);
			
			\fill (-1.9,0) circle (1pt); 
			\fill (-2.1,0) circle (1pt); 
			\fill (-2.3,0) circle (1pt); 

			\fill (0.1,2.0) circle (1pt); 
			\fill (0.3,2.0) circle (1pt); 
			\fill (0.5,2.0) circle (1pt); 
			\fill (-1.9,2.0) circle (1pt); 
			
			\fill (-2.3,2.0) circle (1pt); 
		\end{tikzpicture}
		\caption{The graph $F_{n}^{a,b,r}$}
	\end{minipage}
	\hfill
	\begin{minipage}[c]{0.45\textwidth}
		\centering
		\begin{tikzpicture}[scale =1.2]
			\node[circle,fill=black,draw=black,inner sep=2.2pt] (v1) at (-0.2,0) {};
			\node[circle,fill=black,draw=black,inner sep=2.2pt] (v6) at (0.8,0) {};
			
			\node[circle,fill=black,draw=black,inner sep=2.2pt] (v1.1) at (-1.6,0) {};
			\node[circle,fill=black,draw=black,inner sep=2.2pt] (v1.2) at (-2.6,0) {};
			
			\node[circle,fill=black,draw=black,inner sep=2.2pt] (v2) at (-0.2,2.0) {};
			\node[circle,fill=black,draw=black,inner sep=2.2pt] (v5) at (0.8,2.0) {};
			
			\node[circle,fill=black,draw=black,inner sep=2.2pt] (v7) at (-1.6,2.0) {};
			\node[circle,fill=black,draw=black,inner sep=2.2pt] (v8) at (-2.6,2.0) {};
			\node[circle,fill=black,draw=black,inner sep=1pt] (v9) at (-2.1,2.0)  {};
			
			\node (v10) at (-2.1,-0.3) {};
			\node (v11) at (1.0,2.8) {};
			\node (v13) at (-2.6,2.8) {};
			\node (v14) at (0.5,2.) {};
			\node (v15) at (0.3,2.1) {};
			\node (v16) at (0.1,2.1) {};
			\node (v17) at (-0.8,3.0) {};
			\node (v18) at (0.5,-0.3) {};
			
		\node (v31) at (0.5,2.2) {};
		\node (v32) at (0.3,2.2) {};
		\node (v33) at (0.1,2.2) {};

		\node (v34) at (-1.9,2.2)  {};
		\node (v35) at (-2.1,2.2) {};
		\node (v36) at (-2.3,2.2) {};
			
			\node (v9) at (-2.1,2.0) {};
			\node (v20) at (0.5,2.0) {};
			\node (v21) at (-2.1,0) {};
			\node (v22) at (0.5,0) {};

			\node[below] at (v1) { };
			\node[left] at (v2) { };
			\node[left] at (v5) { };
			\node[below] at (v6) { };
			\node[right] at (v7) { };
			\node[right] at (v8) { };
			\node[below] at (v10) {\large$I=rK_{1}$};  
			\node[below] at (v18) {\large$K_{a}$};  
			\node[overlay] at (v11) {\large$K_{n-(a+r+b+1)}$};   
			\node[overlay] at (v13) {\large$(b+1)K_1$};  
			\node[overlay] at (v17) {\large$a-1$};   
			\draw [line width=1.2pt](v9) -- (v21);
			\draw [line width=1.2pt](v9) -- (v22);
			\draw [line width=1.2pt](v20) -- (v21);
			\draw[line width=1.2pt] (v20) -- (v22);
			\draw[line width=1.2pt] (v21) to[bend right=30] (v22);  
			\draw [line width=1.2pt](0.3,2.0) ellipse (0.65cm and 0.17cm);
			\draw[line width=1.2pt] (v33) to[bend right=30] (v34);  
			\draw[line width=1.2pt] (v32) to[bend right=35] (v35);  
			\draw[line width=1.2pt] (v31) to[bend right=40] (v36);  
			
			\fill (0.1,0) circle (1pt); 
			\fill (0.3,0) circle (1pt); 
			\fill (0.5,0) circle (1pt); 
			\draw [line width=1.2pt](0.3,0) ellipse (0.65cm and 0.17cm);
			
			\fill (-1.9,0) circle (1pt); 
			\fill (-2.1,0) circle (1pt); 
			\fill (-2.3,0) circle (1pt); 

			\fill (0.1,2.0) circle (1pt); 
			\fill (0.3,2.0) circle (1pt); 
			\fill (0.5,2.0) circle (1pt); 
			\fill (-1.9,2.0) circle (1pt); 
			
			\fill (-2.3,2.0) circle (1pt); 
		\end{tikzpicture}
		\caption{The graphs $ \mathscr{F}_{n}^{a,b,r}$}
	\end{minipage}
	\label{figure:1}
\end{figure}
\noindent\begin{theorem}\label{T2} 
\  For integers $a,b,r$ with $r\ge1$ and $b\ge a\ge r+2$,  let $G$ be a connected graph of order $n \geq 2(4b+a+r + 2)(b+r + 2)$  with minimum degree $\delta(G)\ge a+r$. If
$\lambda(G) \geq \lambda(F_{n}^{a,b,r}),$
then $G$ is fractional $ID$-$[a, b]$-factor critical, unless $G \cong F_{n}^{a,b,r}$.
\end{theorem}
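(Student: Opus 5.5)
We argue by contradiction. Suppose $G$ satisfies the hypotheses, $\lambda(G)\ge\lambda(F_{n}^{a,b,r})$, $G\not\cong F_{n}^{a,b,r}$, and $G$ is not fractional $ID$-$[a,b]$-factor critical. Corollary \ref{co1} then gives an independent set $I$ with $|I|=r$ and a set $S\subseteq V(G-I)$ such that
$$b|S|-a|T|+\sum_{v\in T}d_{G-I-S}(v)\le -1,$$
where $T=\{v\in V(G)\setminus(S\cup I): d_{G-I-S}(v)\le a-1\}$. Write $s=|S|$ and $t=|T|$. Every $v\in T$ has $d_G(v)\ge a+r$ but $d_{G-I-S}(v)\le a-1$, so $v$ has at least $r+1$ neighbours in $S\cup I$. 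Since $|I|=r$, this forces $s\ge 1$. From $bs+1\le \sum_{v\in T}(a-d_{G-I-S}(v))\le at$ we get $t\ge \frac{bs+1}{a}\ge s+1$ and, more usefully, $t\ge b+1$ when $s=a$.

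The first step bounds $s$. We use the upper estimate $\lambda(G)\le\sqrt{2e(G)-n+1}$ (Hong's bound), and we estimate $e(G)$ by counting the missing edges inside $T$ and between $T$ and $V(G)\setminus(S\cup I\cup T)$. Each vertex of $T$ has at most $a-1$ neighbours in $G-I-S$, so it misses at least $n-s-r-a$ vertices. This gives
$$e(G)\le \binom{n}{2}-\tfrac12\, t\,(n-s-r-a).$$
Comparing with the lower bound $\lambda(F_{n}^{a,b,r})>\lambda(K_{r+a}\vee K_{n-a-b-r-1})\ge n-a-b-r-2$ shows that $t$ is bounded and $s$ is small relative to $n$. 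In particular, in the regime $s\ge a+1$ or $t\ge b+2$, the missing-edge count of order $t(n-s)$ exceeds what is allowed once $n\ge 2(4b+a+r+2)(b+r+2)$. The hypothesis $a\ge r+2$ is used here to compare $bs$ with $at$, so that small $s$ forces $t$ to be near $b+1$. We expect that, up to a finite set of leftover configurations, this reduces to $s=a$ and $t=b+1$. The possibility $s<a$ has to be handled separately. There the degree condition $\delta\ge a+r$ forces each $T$-vertex to have at least $a-s+r$ neighbours outside $S\cup I$, while the violation inequality still forces about $\frac{bs+1}{a}$ low-degree vertices, and Hong's bound again kills these cases.

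The second step treats the extremal configuration by edge-maximization. With $s,t$ fixed, adding edges never decreases $\lambda$. We may therefore assume $S\cup I$ is joined to everything (keeping $I$ independent), $V(G)\setminus(S\cup I\cup T)$ is a clique, $T$ is independent apart from what the violation inequality allows, and $\sum_{v\in T}d_{G-I-S}(v)$ is as large as possible, namely $at-bs-1$. When $s=a$ and $t=b+1$ this total is $a(b+1)-ab-1=a-1$. The resulting graphs are exactly those obtained from $rK_1\vee(K_a\vee(K_{n-a-b-r-1}\cup(b+1)K_1))$ by distributing $a-1$ extra edges from $T$ into the big clique (or inside $T$), which is the family $\mathscr{F}_{n}^{a,b,r}$ of Figure 2. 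We then show that concentrating all $a-1$ edges on a single vertex of $T$ maximizes $\lambda$. The tool is a Perron-vector edge-switching (Kelner-type) argument: moving an edge $uw$ to $u'w$ with $x_{u'}\ge x_u$ strictly increases $\lambda$, and the Perron entries of $T$-vertices grow with their degree. Hence $\lambda(G)\le\lambda(F_{n}^{a,b,r})$, with equality only if $G\cong F_{n}^{a,b,r}$, which is a contradiction.

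The remaining boundary cases ($s=a$ with $t\ge b+2$, and $s\ne a$ with small $t$) are handled by comparing quotient matrices of equitable partitions. We bound the largest root of the corresponding characteristic polynomial against $\lambda(F_{n}^{a,b,r})$, which itself is estimated via a $4\times4$ or $5\times5$ quotient matrix. We expect this quantitative comparison to be the main obstacle: both spectral radii are $n-O(b+r)$ and differ only in lower-order terms, so a precise estimate of $\lambda(F_{n}^{a,b,r})$ is needed. Our plan is to evaluate each competitor's characteristic polynomial at the value $\lambda(F_{n}^{a,b,r})$ (or at an explicit lower bound such as $n-a-b-r-1+\epsilon$) and show it is positive there, with the large order $n\ge 2(4b+a+r+2)(b+r+2)$ absorbing the error terms.
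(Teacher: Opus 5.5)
Your outline has the right overall shape (extremal structure, then Perron switching), but three steps fail as written.

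\textbf{First step: the estimates cannot force $t=b+1$.} There are two quantitative problems.

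\emph{The edge bound loses a factor $2$.} From $e(G)\le\binom n2-\frac12t(n-s-r-a)$, Hong's bound only gives $\lambda(G)\lesssim n-\frac t2-1$. The right count (Lemma~\ref{le:4}(ii)) bounds the edges at $T$ inside $G-I$ by $st+\sum_{v\in T}d_{G-I-S}(v)\le st+at-bs-1$. This yields $2e(G)-n+1\le n^2-2(t+1)n+O((t+r)^2)$, hence $\lambda(G)\lesssim n-t-1$. That is about $n-b-2$ for $t=b+1$ and about $n-b-3$ for $t\ge b+2$.

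\emph{The lower bound on $\lambda(F_n^{a,b,r})$ is too weak.} You use $n-a-b-r-2$. (Also, $K_{r+a}$ is not a subgraph for $r\ge2$, since $I$ is independent.) To separate $t=b+1$ from $t\ge b+2$ you need a lower bound strictly above $n-b-3$. The paper proves $\lambda(F_n^{a,b,r})>n-b-\frac52$ from a $4\times4$ equitable quotient matrix (Lemma~\ref{le:2.1}). It then uses the Hong--Shu--Fang/Nikiforov bound with $\delta\ge a+r$ to get $e(G)>\frac12n^2-(b+2)n+O(1)$, and this rules out $t\ge b+2$ over three ranges of $t$.

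So the competitors do not differ from $F_n^{a,b,r}$ ``only in lower-order terms.'' The gap is of constant order, which is exactly why a crude bound suffices once $\lambda(F_n^{a,b,r})$ is known to within $\frac12$. With your estimates you only reach $t\lesssim 2(a+b+r+1)$. For the leftover pairs $(s,t)$ the graph is not determined, because the $at-bs-1$ edges leaving $T$ can sit anywhere. So there is no single quotient matrix to compare against.

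\textbf{The case $s<a$ cannot be settled by Hong's bound.} With $t=b+1$, its edge bound is $e(F_n^{a,b,r})-(a-s)$. That changes any Hong-type bound by only $O((a-s)/n)$, which is well below the bound's slack of order $(b+1)(a+r)/n$ at $F_n^{a,b,r}$. Also, a $T$-vertex has at least $a-s$ neighbours outside $S\cup I$, not $a-s+r$. The paper's mechanism is different:
\begin{itemize}
\item choose $S$ of maximum size among violating triples;
\item apply Lemma~\ref{le:5} to the extremal graph to force every $T$-vertex to be adjacent to $w_1,\dots,w_{a-s}$, the largest Perron entries in $W$;
\item absorb these vertices into $S$, contradicting maximality of $s$.
\end{itemize}
The case $s\ge a+1$ is immediate, since $at-bs-1<0$. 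Edges inside $T$ must also be switched out first (Claim~3 in the proof of Theorem~\ref{T2}). So your family ``or inside $T$'' is not $\mathscr{F}_n^{a,b,r}$.

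\textbf{The concentration step does not follow from single-vertex switching.} Lemma~\ref{le:5} only yields nested neighbourhoods $N_W(t_j)\subseteq N_W(t_1)=\{w_1,\dots,w_l\}$. If $l\le a-2$, every remaining $T$--$W$ edge $t_jw_i$ has $w_i$ already adjacent to $t_1$. Moving it to $t_1$ therefore also requires changing the $W$-endpoint, and no Kelner-type switch does that. The paper's Lemma~\ref{le:2.2} handles this with the identity $\mathbf y^{T}(\lambda(G^*)-\lambda(G))\mathbf x=\mathbf y^{T}(A(G^*)-A(G))\mathbf x$ and explicit entry estimates, such as $x(s_1)<\frac{10}{9}x(u_1)$, $x(w_{l+1})>\frac{a+\frac{9}{10}r+1}{a+r+2}\,x(w_1)$, and a lower bound on $y(t_1)/y(t_2)$. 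The hypothesis $a\ge r+2$ is genuinely used here and in Claim~2 (Case~2), not in comparing $bs$ with $at$.
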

\noindent{\bf Remark.} By Cliam 1 in Lemma \ref{le:2.2} in Section 4, we obtain that $\lambda(F_{n}^{a,b,r})<n-b-1\le n-r-1$ for $b\ge r$. Hence, Theorem \ref{T2} improves Theorem \ref{T1.2}.
\noindent\begin{theorem}\label{T3}  \  For integers $a,b,r$ with $r\ge1$ and $b\ge a\ge 1$, let $G$ be a connected graph of order $n\ge4(a+b+r+1)$ with minimum degree $\delta(G)\ge a+r$. If
	$e(G)\ge \binom{n-r-b-1}{2}+r(n-r)+a(b+1)+a,$
then $G$ is fractional $ID$-$[a, b]$-factor critical.
\end{theorem}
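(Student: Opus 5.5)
We argue by contradiction: let $G$ satisfy the hypotheses of Theorem \ref{T3} but fail to be fractional $ID$-$[a,b]$-factor critical. By Corollary \ref{co1} there are an independent set $I$ with $|I|=r$ and a set $S\subseteq V(G)\setminus I$ with
\[
b|S|-a|T|+\sum_{v\in T}d_{G-I-S}(v)\le -1,
\]
where $T=\{v\in V(G)\setminus(S\cup I): d_{G-I-S}(v)\le a-1\}$. Write $s=|S|$, $t=|T|$ and $R=V(G)\setminus(I\cup S\cup T)$, so that $|R|=n-r-s-t$. Since $d_{G-I-S}(v)\ge 0$, the inequality forces $bs\le at-1$, and in particular $t\ge 1$. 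The plan is to bound $e(G)$ from above in terms of $s$ and $t$, and to show that this bound is always smaller than $\binom{n-r-b-1}{2}+r(n-r)+a(b+1)+a$.

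\textbf{Step 1 (counting edges).} We bound each class of edges separately.
\begin{itemize}
\item Edges inside $I$: there are none, because $I$ is independent.
\item Edges with an endpoint in $I$: at most $r(n-r)$.
\item Edges inside $S\cup R$: at most $\binom{n-r-t}{2}$.
\item Edges meeting $T$ but not $I$: each $v\in T$ has at most $s$ neighbours in $S$ and $d_{G-I-S}(v)$ neighbours in $T\cup R$. So these edges number at most $st+\sum_{v\in T}d_{G-I-S}(v)\le st+at-bs-1$.
\end{itemize}
Altogether,
\[
e(G)\le \binom{n-r-t}{2}+r(n-r)+st+at-bs-1 .
\]

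\textbf{Step 2 (small $t$ via minimum degree).} Take $v\in T$. We have $d_G(v)\le r+s+d_{G-I-S}(v)$, and $d_G(v)\ge a+r$. If $s=0$, this gives $d_{G-I-S}(v)\ge a$, contradicting $v\in T$. Hence $s\ge 1$, and then $t\ge (bs+1)/a>s$. I will also use the sharper pointwise bound $d_{G-I-S}(v)\ge a-s$, which gives $\sum_{v\in T}d_{G-I-S}(v)\ge t\max\{0,a-s\}$.

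\textbf{Step 3 (main case analysis on $t$).} Set $f(t)=\binom{n-r-t}{2}+st+at-bs-1$. Since $s\le (at-1)/b$, the term $st$ is at most $at^2/b$, so it is lower order compared with the quadratic decrease of $\binom{n-r-t}{2}$ as $t$ grows. There are two cases.
\begin{itemize}
\item \emph{Case $t\ge b+1$.} Here $\binom{n-r-t}{2}\le\binom{n-r-b-1}{2}-(t-b-1)(n-r-b-1-\tfrac{t-b-2}{2})$. When $t$ is well below $n$, the loss is roughly $(t-b-1)n$, while the gain $st+at$ is $O(t^2)$; the assumption $n\ge 4(a+b+r+1)$ is meant to make the loss dominate. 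When $t$ is large, close to $n-r-s$, I would use a direct bound instead: the set $T$ induces a graph of maximum degree at most $a-1$, so $e(G)\le \binom{n-t}{2}+\ldots$, which is far too small. At $t=b+1$ the bound becomes $\binom{n-r-b-1}{2}+r(n-r)+s(b+1)+a(b+1)-bs-1=\text{target}+s-a-1$. This is below the target exactly when $s\le a$, which must be checked using $bs\le a(b+1)-1$, i.e.\ $s<a+a/b$, so $s\le a$.
\item \emph{Case $t\le b$.} Here $bs\le at-1\le ab-1$ gives $s\le a-1$. Then Step 2 gives $\sum_{v\in T} d_{G-I-S}(v)\ge t(a-s)$, so $b s-at+t(a-s)\le -1$, i.e.\ $s(b-t)\le -1$. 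This is impossible for $t\le b$.
\end{itemize}

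The main obstacle will be making Case $t\ge b+1$ uniform over the whole range $b+1\le t\le n-r-s$. The function $f$ is convex in $t$, so its maximum over this range is attained at an endpoint, and both endpoints have to be compared with the target using $n\ge 4(a+b+r+1)$. At the upper end I expect to need the refined estimate via the bounded degree of $G[T]$ together with the $I$ and $S$ contributions, rather than the crude $\binom{n-r-t}{2}$ term.
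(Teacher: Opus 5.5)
\textbf{Genuine gap: the range $t\ge b+2$ is not proved.} Your framework matches the paper's. Step 1 is Lemma \ref{le:4}(ii), and Step 2 together with your case $t\le b$ is Lemma \ref{le:4}(i). Your computation at $t=b+1$ is exactly the paper's Subcase 1.1, $y(b+1)=a+1-s$: writing $M=\binom{n-r-b-1}{2}+r(n-r)+a(b+1)+a$, the bound there is $M+s-a-1$, and $s\le a$ is forced by $bs\le a(b+1)-1$. The problem is that $t\ge b+2$, where all the work lies, is left undone, and the shortcut you sketch for it fails as stated.

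With $s$ fixed, $F(s,t)=\binom{n-r-t}{2}+r(n-r)+st+at-bs-1$ is indeed convex in $t$. However, its value at your lower endpoint $t=b+1$ is $M+s-a-1$, which is $\ge M$ as soon as $s\ge a+1$. You cannot invoke $bs\le a(b+1)-1$ at that endpoint, because $bs\le at-1$ holds only at the actual $t$. For $s\ge a+1$ the correct lower endpoint is $t_0=\lceil (bs+1)/a\rceil\ge b+2$, and showing $F(s,t_0)<M$ is precisely the missing middle-range estimate.

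Your heuristic that $st\le at^2/b$ is ``lower order'' is also off. The exact loss is $\binom{n-r-b-1}{2}-\binom{n-r-t}{2}=(t-b-1)\bigl(n-r-\tfrac{t+b+2}{2}\bigr)$. Once $t$ is a constant fraction of $n$, this is of the same order $n^2$ as the gain $(t-b)s$, so the constants in $n\ge 4(a+b+r+1)$ genuinely matter.

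\textbf{How the paper closes this.} It reverses the roles of $s$ and $t$. For fixed $t\ge b+2$ the bound is increasing in $s$, since its $s$-coefficient is $t-b>0$. So one substitutes the largest admissible $s$ and works with the deficit $y(t)=M-(\text{bound})$.
\begin{itemize}
\item For $b+2\le t\le n/2$, take $s=t-1$. The deficit becomes a concave quadratic $q(t)$, so it suffices to check $q(b+2)=n-3b-r-3>0$ and $q(n/2)>0$. Both follow from $n\ge 4(a+b+r+1)$.
\item For $t\ge (n+1)/2$, take $s=n-r-t$. The deficit becomes $\frac{t^2}{2}-(a+b+\frac12)t+\dots$, which is increasing in this range and positive at $t=(n+1)/2$.
\end{itemize}
In particular, your concern about the upper end is unfounded. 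The crude $\binom{n-r-t}{2}$ term already suffices there, and no refined estimate using the degree bound on $G[T]$ is needed.
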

\noindent{\bf Remark.} It is easy to verify that, in most cases, $$\frac{1}{2}n^2 - (r + \frac{1}{2})n + r^2 + \frac{(a+1)}{2}r>\binom{n-r-b-1}{2}+r(n-r)+a(b+1)+a.$$ Hence Theorem \ref{T3} is better than Theorem \ref{T1.1}. In fact, The result of Theorem \ref{T3}  achieves the best possible condition. By direct calculation,  $e(F_{n}^{a,b,r})=\binom{n-r-b-1}{2}+r(n-r)+a(b+1)+a-1$. Since $F_{n}^{a,b,r}$ is not fractional $ID$-$[a, b]$-factor critical due to Lemma \ref{le:1}, hence the condition in Theorem \ref{T3} is best possible.

The remainder of this paper is organized as follows. In Section 2, we introduce some lemmas for the proofs of subsequent theorems. In Sections 3, 4, and 5, we prove Theorems \ref{T1}, \ref{T2}, and \ref{T3}, respectively.
\section{Preliminaries}

\quad\quad In this section, we presents essential lemmas for the proofs of subsequent theorems. 

\begin{lemma}\label{le:4}
	Let $b\ge a\ge 1$ and $r\ge 1$ be integers, and let $G$ be a connected graph of order $n$ with minimum degree $\delta(G)\ge a+r$. If there exist two disjoint subsets $S, T$ and independent set $I$ of $V (G)$ with $|S| = s$, $|T|=t$ and $|I|=r$ such that
	$\sum_{v \in T} d_{G-I-S}(v) \leq at - bs  - 1,	$
	then
	\begin{enumerate}
		\item[(i)] $1 \leq s \leq t  - 1, \quad t \geq b + 1.$
		\item[(ii)] $e(G)\le at-bs-1 + st + \frac{(n -r- t)(n -r- t - 1)}{2}+r(n-r)$.
	\end{enumerate}
\end{lemma}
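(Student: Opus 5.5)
The plan is to argue directly from the hypothesis
$\sum_{v\in T} d_{G-I-S}(v)\le at-bs-1$,
using two ingredients: the minimum degree condition $\delta(G)\ge a+r$ gives lower bounds for part (i), and a partition of the edge set gives part (ii). As in Corollary \ref{co1}, I take $S\subseteq V(G-I)$ and $T\subseteq V(G)\setminus(S\cup I)$, so that $I,S,T$ are pairwise disjoint.

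For (i), the left-hand side is nonnegative, so $at-bs-1\ge 0$, i.e.\ $at>bs$.

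First I show $s\ge 1$. Suppose $s=0$. Every $v\in T$ loses at most $|I|=r$ neighbours when $I$ is deleted, so
$d_{G-I}(v)\ge d_G(v)-r\ge a$.
Summing over $T$ gives $\sum_{v\in T}d_{G-I}(v)\ge at$, which contradicts the hypothesis bound $at-1$.

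Next, from $at>bs\ge as$ and $a\ge 1$ we get $t>s$, that is, $s\le t-1$.

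Finally, for $t\ge b+1$: each $v\in T$ loses at most $r+s$ neighbours when $I\cup S$ is deleted. Hence $d_{G-I-S}(v)\ge a+r-r-s=a-s$, and summing gives $(a-s)t\le at-bs-1$. This simplifies to $st\ge bs+1$. Dividing by $s\ge 1$ gives $t\ge b+1/s>b$, so $t\ge b+1$ since $t$ is an integer.

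For (ii), I partition $E(G)$ by where the endpoints lie. Write $R=V(G)\setminus(I\cup S\cup T)$.
\begin{enumerate}
\item[(a)] Edges incident to $I$. Since $I$ is independent, each such edge has exactly one endpoint in $I$. Each vertex of $I$ has at most $n-r$ neighbours outside $I$, so there are at most $r(n-r)$ such edges.
\item[(b)] Edges between $S$ and $T$: at most $st$.
\item[(c)] Edges with both ends in $T$, or with one end in $T$ and the other in $R$. Each such edge is counted at least once in $\sum_{v\in T} d_{G-I-S}(v)$; edges inside $T$ are counted twice. So there are at most $at-bs-1$ of them.
\item[(d)] Edges with both ends in $S\cup R$. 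This set has $n-r-t$ vertices, so there are at most $\binom{n-r-t}{2}=\frac{(n-r-t)(n-r-t-1)}{2}$ such edges.
\end{enumerate}
These four classes cover $E(G)$ and are pairwise disjoint. Adding the four bounds gives exactly the inequality in (ii).

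I do not expect a real obstacle here. The only points needing care are (1) the role of the independence of $I$ in (a), so that edges inside $I$ are not missed, and (2) not double-counting the $S$–$R$ edges, which lie only in class (d).
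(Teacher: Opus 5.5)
Your proposal is correct and follows essentially the same route as the paper. Part (i) uses the same minimum-degree bounds $d_{G-I}(v)\ge a$ and $d_{G-I-S}(v)\ge a-s$, and part (ii) uses the same edge decomposition $e(I,G-I)+e(S,T)+\bigl[e(T)+e(T,W)\bigr]+e(G-T-I)$. The only cosmetic differences are that you derive $s\le t-1$ directly from $at>bs\ge as$ rather than by contradiction from $s\ge t$, and that you state explicitly the pairwise disjointness of $I,S,T$, which the paper uses implicitly.
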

\begin{proof}
(i)	If $s=0$, since $\delta(G) \geq a+r$, we obtain $d_{G-I-S}(v) =d_{G-I}(v) \geq \delta(G)-r\ge a$ for any $v \in T$ and
	$
	at \leq \sum_{v \in T} d_{G-I}(v) \leq at-1,
	$
	a contradiction. Hence $s \geq 1$.
	
	If $s \geq t$, note that $b\ge a\ge 1$, we have
	$
	0\le \sum_{v \in T} d_{G-I-S}(v) \leq at - bs  - 1\le (a-b)t-1\le -1,
	$
	a contradiction. Hence, $s \leq t-1$. 
	
	Since $\delta(G) \geq a+r$ and $d_{G-I-S}(v) \geq a+r-r- s=a-s$ for any $v \in T$, we obtain
	$
	(a-s)t \leq \sum_{v \in T} d_{G-I-S}(v) \leq at - bs  - 1.
	$
	Therefore, $t \geq b + \frac{1}{s}$. Since $t$ is a positive integer and $s \geq 1$, we obtain $t \geq b + 1$.
	
	(ii) Let $W = V(G)\setminus(I\cup S \cup T).$ Then
	\begin{equation*}
		\begin{aligned}
			e(G)&=e(W, T)+e(T) +e(S, T) + e(G - T-I)+e(I,G-I)\\
			&\leq \sum_{v \in T} d_{G-I-S}(v) + e(S, T)  + e(G - T-I)+e(I,G-I) \\
			&\leq at-bs-1 + st + \binom{n -r- t}{2}+r(n-r) \\
			&= at-bs-1 + st + \frac{(n -r- t)(n -r- t - 1)}{2}+r(n-r).
		\end{aligned}
	\end{equation*}
	This completes the proof of Lemma \ref{le:4}.
\end{proof}

\begin{lemma}\label{le:1}
	Let $a, b, r$ be integers such that $1 \le a \le b$ and $r \ge 1$. Then	
	\begin{enumerate}
		\item[(i)] $H_{n}^{a,r}$ is not fractional $ID$-$[a,b]$-factor critical.
		\item[(ii)] $F_{n}^{a,b,r}$ (see Figure 1) is not fractional $ID$-$[a,b]$-factor critical.
	\end{enumerate}
\end{lemma}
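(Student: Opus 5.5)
The plan is to use Corollary \ref{co1}. For each graph we exhibit an independent set $I$ with $|I|=r$ and a set $S\subseteq V(G-I)$ for which
\[
b|S|-a|T|+\sum_{v\in T}d_{G-I-S}(v)<0 ,
\]
where $T$ is determined by $S$ and $I$ as in the corollary. One observation removes any need to know the whole set $T$ exactly. Write the left-hand side as $b|S|+\sum_{v\in T}\bigl(d_{G-I-S}(v)-a\bigr)$. Every $v\in T$ satisfies $d_{G-I-S}(v)\le a-1$, so every summand is at most $-1$. Hence if $T_0\subseteq T$ is a set of vertices we can identify explicitly, then
\[
b|S|-a|T|+\sum_{v\in T}d_{G-I-S}(v)\le b|S|+\sum_{v\in T_0}\bigl(d_{G-I-S}(v)-a\bigr).
\]
It therefore suffices to make the right-hand side negative. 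This argument needs no lower bound on $n$, and it covers degenerate cases such as $n-a-r=1$, where extra vertices may also have small degree.

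\emph{Part (i).} In $H_n^{a,r}=rK_1\vee(K_{a-1}\vee(K_{n-a-r}\cup K_1))$, take $I=V(rK_1)$, which is independent of size $r$, and take $S=\emptyset$. Let $u$ be the vertex of the $K_1$ component. In $G-I$, the neighbours of $u$ are exactly the $a-1$ vertices of $K_{a-1}$, so $d_{G-I}(u)=a-1$ and $u\in T$. With $T_0=\{u\}$ the bound becomes
\[
0+(a-1)-a=-1<0 .
\]
So the condition of Corollary \ref{co1} fails, and $H_n^{a,r}$ is not fractional $ID$-$[a,b]$-factor critical.

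\emph{Part (ii).} In $F_n^{a,b,r}$, take $I=V(rK_1)$ and $S=V(K_a)$, so $|S|=a$. Let $T_0$ be the $b+1$ vertices of $(b+1)K_1$. After deleting $I\cup S$:
\begin{itemize}
\item $b$ of these vertices become isolated, so each has degree $0$;
\item the remaining one, $w$, keeps exactly its $a-1$ added neighbours in $K_{n-(a+b+r+1)}$, so $d_{G-I-S}(w)=a-1$.
\end{itemize}
All of them have degree at most $a-1$, so $T_0\subseteq T$. The bound then gives
\[
ab+\bigl(b\cdot(0-a)+(a-1-a)\bigr)=ab-ab-1=-1<0 .
\]
So Corollary \ref{co1} shows that $F_n^{a,b,r}$ is not fractional $ID$-$[a,b]$-factor critical.

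The only point needing care is the exact neighbourhoods in the constructions. In particular, in $F_n^{a,b,r}$ the vertices of $(b+1)K_1$ are adjacent only to $I$, to $K_a$, and (for $w$ alone) to the $a-1$ added vertices. The counting itself is immediate once $T_0\subseteq T$ is established, and the monotonicity observation removes the main potential obstacle of checking whether other vertices, such as those of the large clique, fall into $T$.
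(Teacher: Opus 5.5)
Your proof is correct and follows the paper: for (ii) the paper uses the same $I=V(rK_1)$, $S=V(K_a)$ and the $b+1$ vertices of $(b+1)K_1$, and for (i) it argues directly that the $K_1$ vertex has degree $a-1$ in $G-I$, so no fractional $[a,b]$-factor exists; this is your $S=\emptyset$ case of Corollary~\ref{co1}. Your monotonicity remark is a small improvement: the paper simply sets $T=V((b+1)K_1)$ without checking that no other vertex has degree at most $a-1$ in $G-I-S$, and your observation that extra vertices in $T$ only decrease the left-hand side closes that point for all $n$.
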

\begin{proof}
	Recall that $H_{n}^{a,r}=rK_1\vee \big(K_{a-1}\vee (K_{n-a-r}\cup K_1)\big).$ For $v\in K_1$, we obtain $d_G(v)=a+r-1$. Let $I=rK_1$, then $d_{G-I}(v)=a-1$. Let $h: E(G) \to [0,1]$ be a function.
	Then for vertex $v \in V(K_1)$,  $\sum_{e \in E_{G-I}(v)} h(e) \leq a-1$, which implies that $G-I$ has no fractional $[a,b]$-factor. Hence $H_{n}^{a,r}$ is not fractional $ID$-$[a,b]$-factor critical.
	
	 Let $S = V(K_{a})$, $I=rK_1$ and $T = V((b + 1)K_1)$ in $F_{n}^{a,b,r}$. Note that $\sum_{v \in T} d_{G-I-S}(v) = a - 1$, we have
	$b|S|-a|T| + \sum_{v \in T} d_{G-I-S}(v) = ba-a(b+1)+a-1 = -1<0$. By Corollary \ref{co1}, it follows that the graph $F_{n}^{a,b,r}$ is not fractional $ID$-$[a,b]$-factor critical.
\end{proof}
\begin{lemma}(\cite{BA})\label{le:2} \ Let $ G $ be a connected graph and $ H $ be a subgraph of $ G $. Then $ \lambda(H) \leq \lambda(G), $
	with the equality holds if and only if $ H \cong G $.
\end{lemma}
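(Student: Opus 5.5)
The plan is to prove Lemma \ref{le:2} through the Rayleigh quotient characterization of the spectral radius, together with the Perron--Frobenius theorem for the connected graph $G$. Nothing from the factor-theoretic part of the paper is needed.

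First I reduce to spanning subgraphs. If $H$ has fewer vertices than $G$, add the missing vertices of $V(G)$ to $H$ as isolated vertices. This does not change $\lambda(H)$, since the spectrum only gains zeros and $\lambda(H)\ge 0$. So we may assume $V(H)=V(G)$ and $E(H)\subseteq E(G)$, which means $A(H)\le A(G)$ entrywise.

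Next I prove the inequality. Because $A(H)$ is a nonnegative symmetric matrix, Perron--Frobenius gives a unit eigenvector $x\ge 0$ with $A(H)x=\lambda(H)x$; this holds even if $H$ is disconnected. Then
\[
\lambda(H)=x^{T}A(H)x\le x^{T}A(G)x\le \max_{\|y\|=1}y^{T}A(G)y=\lambda(G).
\]
The first inequality holds because $x\ge 0$ and $A(G)-A(H)\ge 0$ entrywise. The second is the Rayleigh principle for the symmetric matrix $A(G)$.

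The equality case is the main point, and here the connectivity of $G$ enters. Suppose $\lambda(H)=\lambda(G)$. Then both inequalities above are equalities. Equality in the Rayleigh bound forces $x$ to be an eigenvector of $A(G)$ for $\lambda(G)$. Since $G$ is connected, $A(G)$ is irreducible, so by Perron--Frobenius this eigenspace is one-dimensional and spanned by a strictly positive vector. Because $x\ge 0$ and $x\neq 0$, it follows that $x>0$.

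Now the first equality reads $x^{T}(A(G)-A(H))x=0$. This is a sum of nonnegative terms $2x_ix_j$, one for each edge $v_iv_j\in E(G)\setminus E(H)$, and each term is strictly positive because $x>0$. So $E(G)\setminus E(H)=\emptyset$. Moreover, no vertices were added as isolated padding: a padded vertex has degree at least $1$ in the connected graph $G$ (when $n\ge2$), so it would be incident to an edge in $E(G)\setminus E(H)$. Hence $H\cong G$. The converse direction of the equality statement is trivial.
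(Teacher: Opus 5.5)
The paper gives no proof of this lemma and simply quotes it from Bapat's book, so there is no in-paper argument to compare against. Your proof is the standard one and it is correct: pad $H$ to a spanning subgraph, bound $\lambda(H)=x^{T}A(H)x\le x^{T}A(G)x\le\lambda(G)$ using a nonnegative unit eigenvector $x$ of $A(H)$, and in the equality case use irreducibility of $A(G)$ to force $x>0$, which rules out both missing edges and padded vertices.

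Keep the padding step. The paper applies the lemma to non-spanning subgraphs such as $K_{n-r-1}\subset H_{n}^{a,r}$ and $K_{n-r-b-1}\subset G$, so the spanning case alone would not cover its uses.
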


\begin{lemma}(\cite{H})\label{le:3}
	Let $G$ be a connected graph with $n$ vertices. Then
	$\lambda(G) \leq \sqrt{2e(G) - n + 1}.$
\end{lemma}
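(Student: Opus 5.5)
The plan is the standard Perron-vector argument: bound $\lambda^2$ vertex by vertex with Cauchy--Schwarz, keep track of the mass on non-neighbours, and use connectivity to absorb it. Let $x=(x_1,\dots,x_n)^T$ be the unit Perron vector of $A(G)$. Since $G$ is connected, $x$ is positive and $\sum_i x_i^2=1$.

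For each vertex $v_i$, the eigen-equation gives $\lambda x_i=\sum_{v_j\in N_G(v_i)}x_j$. Cauchy--Schwarz then yields
$$\lambda^2x_i^2\le d_G(v_i)\sum_{v_j\in N_G(v_i)}x_j^2 .$$
The key is not to discard the missing terms. We write
$$\sum_{v_j\in N_G(v_i)}x_j^2=1-x_i^2-\sum_{v_j\notin N_G[v_i]}x_j^2,$$
which gives
$$\lambda^2x_i^2\le d_G(v_i)\Big(1-x_i^2-\sum_{v_j\notin N_G[v_i]}x_j^2\Big).$$

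Summing over $i$ and using $\sum_i x_i^2=1$ and $\sum_i d_G(v_i)=2e(G)$, we get
$$\lambda^2\le 2e(G)-\sum_i d_G(v_i)x_i^2-\sum_i d_G(v_i)\sum_{v_j\notin N_G[v_i]}x_j^2 .$$
In the double sum, the pair condition "$v_j\notin N_G[v_i]$" is symmetric in $i$ and $j$. So we may exchange the order of summation, and the subtracted quantity becomes
$$\sum_j x_j^2\Big(d_G(v_j)+\sum_{v_i\notin N_G[v_j]}d_G(v_i)\Big).$$

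The decisive step is bounding the bracket below by $n-1$ for every $j$. Because $G$ is connected with $n\ge2$, every vertex has degree at least $1$. Vertex $v_j$ has exactly $n-1-d_G(v_j)$ non-neighbours other than itself, and each contributes at least $1$ to the inner sum. Hence the bracket is at least
$$d_G(v_j)+(n-1-d_G(v_j))=n-1 .$$
Therefore $\lambda^2\le 2e(G)-(n-1)\sum_j x_j^2=2e(G)-n+1$, which is the claim; the case $n=1$ is trivial.

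The main obstacle is recognising that the crude estimate $\sum_{N_G(v_i)}x_j^2\le 1-x_i^2$ only gives $\lambda^2\le 2e(G)-1$. To reach $2e(G)-n+1$, the subtracted non-neighbour mass must be kept and redistributed by the exchange of summation, so that connectivity ($\delta(G)\ge1$) can be applied once per non-neighbour.
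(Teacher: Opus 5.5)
Your proof is correct and complete. It is Hong's original argument: Cauchy--Schwarz on the eigen-equation, keeping the non-neighbour mass, exchanging the order of summation, and using $\delta(G)\ge 1$ to get $d_G(v_j)+\sum_{v_i\notin N_G[v_j]}d_G(v_i)\ge n-1$. The paper gives no proof of its own and simply cites \cite{H}.
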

According to Perron-Frobenius Theorem, for the adjacency matrix $A(G) $ of a connected graph $G$, there exists a positive eigenvector $ \mathbf{x}$ corresponding to $\lambda(G)$. We use $x(v)$ to denote the corresponding entry of the eigenvector $\mathbf{x}$ for every vertex $v \in V(G)$. 
\begin{lemma}(\cite{LLT})\label{le:5} \ Let $G$ be a connected graph and let $u, v$ be two vertices of $G$. 
	Suppose that $v_1, v_2, \dots, v_s \in N_G(v)\setminus N_G(u)$ with $s \geq 1$, and $G^*$ is the graph obtained 
	from $G$ by deleting the edges $vv_i$ and adding the edges $uv_i$ for $1 \leq i \leq s$. Let $\mathbf{x}$ be the 
	Perron vector of $A(G)$. If $x(u) \geq x(v)$, then $\lambda(G) < \lambda(G^*)$.
\end{lemma}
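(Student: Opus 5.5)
This is the Kelmans-type shifting lemma of \cite{LLT}. I would argue via the Rayleigh quotient applied to the Perron vector $\mathbf{x}$ of $A(G)$. Write $\lambda=\lambda(G)$ and normalize $\mathbf{x}$ so that $\mathbf{x}^T\mathbf{x}=1$. Since $G^*$ has the same vertex set, the Rayleigh principle gives $\lambda(G^*)\ge \mathbf{x}^TA(G^*)\mathbf{x}$. Compute the difference edge by edge: deleting $vv_i$ and adding $uv_i$ for $1\le i\le s$ gives
\[
\mathbf{x}^TA(G^*)\mathbf{x}-\mathbf{x}^TA(G)\mathbf{x}=2\sum_{i=1}^{s}x(v_i)\bigl(x(u)-x(v)\bigr)\ge 0,
\]
because $x(u)\ge x(v)$ and all entries of $\mathbf{x}$ are positive. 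Hence $\lambda(G^*)\ge\lambda(G)$.

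The main step is to rule out equality. Suppose $\lambda(G^*)=\lambda(G)$. Then $\mathbf{x}$ attains the maximum of the Rayleigh quotient for the symmetric matrix $A(G^*)$, so $\mathbf{x}$ is an eigenvector: $A(G^*)\mathbf{x}=\lambda\mathbf{x}$. (Connectivity of $G^*$ is not needed for this.)

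Now compare the eigen-equations at $u$ in $G$ and $G^*$. In $G^*$ the vertex $u$ gains the neighbours $v_1,\dots,v_s$, which were not neighbours of $u$ in $G$, and loses nothing. (If $u=v_i$ for some $i$ the construction is degenerate; I would note $u\ne v_i$, since $uv_i$ must be a new edge.) Therefore
\[
\lambda x(u)=\sum_{w\in N_{G^*}(u)}x(w)=\sum_{w\in N_G(u)}x(w)+\sum_{i=1}^{s}x(v_i)=\lambda x(u)+\sum_{i=1}^{s}x(v_i).
\]
This forces $\sum_{i=1}^{s} x(v_i)=0$, which contradicts positivity of $\mathbf{x}$ together with $s\ge1$. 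Hence $\lambda(G^*)>\lambda(G)$.

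The only delicate point is this equality case. Invoking the variational characterization turns the extremal vector into an eigenvector, and the local eigen-equation at $u$ then gives the contradiction.
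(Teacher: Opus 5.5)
Your proof is correct, and it is the standard argument for this shifting lemma. The paper itself gives no proof and only quotes the lemma from \cite{LLT}. The Rayleigh quotient with the Perron vector gives $\lambda(G^*)\ge\lambda(G)$; in the equality case $\mathbf{x}$ must be an eigenvector of $A(G^*)$, and comparing the eigen-equations at $u$ forces $\sum_{i}x(v_i)=0$, a contradiction. Your observations that $v_i\neq u$ is implicitly required and that connectivity of $G^*$ is not needed are both right.
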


\begin{lemma}\label{le:6}\cite{TFZ}
\	Let $u, v$ be two distinct vertices of a connected graph $G$, and let $\mathbf{x}$ be the Perron vector of $A(G)$.
	\begin{enumerate}
		\item[(i)] If $N_{G}(v)\setminus\{u\} \subset N_{G}(u)\setminus\{v\}$, then $x(u) > x(v)$.
		\item[(ii)] If $N_{G}(v) \subseteq N_{G}[u]$ and $N_{G}(u) \subseteq N_{G}[v]$, then $x(u) = x(v)$.
	\end{enumerate}
\end{lemma}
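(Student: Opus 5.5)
The plan is to work directly with the eigen-equations of the Perron vector at the two vertices $u$ and $v$ and to compare them. Since $G$ is connected and has at least the two distinct vertices $u,v$, it has an edge, so $\lambda(G)>0$, and by the Perron--Frobenius theorem $x(w)>0$ for every $w\in V(G)$. Put $\varepsilon=1$ if $uv\in E(G)$ and $\varepsilon=0$ otherwise. The eigen-equations are
\[
\lambda(G)x(u)=\sum_{w\in N_G(u)}x(w),\qquad \lambda(G)x(v)=\sum_{w\in N_G(v)}x(w).
\]

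First I will split off the possible neighbour $v$ of $u$ and the possible neighbour $u$ of $v$. This gives
\[
\lambda(G)x(u)=\sum_{w\in N_G(u)\setminus\{v\}}x(w)+\varepsilon x(v),\qquad \lambda(G)x(v)=\sum_{w\in N_G(v)\setminus\{u\}}x(w)+\varepsilon x(u).
\]
Subtracting and moving the $\varepsilon$-terms to the left, I obtain the key identity
\[
\bigl(\lambda(G)+\varepsilon\bigr)\bigl(x(u)-x(v)\bigr)=\sum_{w\in N_G(u)\setminus\{v\}}x(w)-\sum_{w\in N_G(v)\setminus\{u\}}x(w).
\]
The factor $\lambda(G)+\varepsilon$ is strictly positive, so the sign of $x(u)-x(v)$ equals the sign of the right-hand side.

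For (i), the hypothesis $N_G(v)\setminus\{u\}\subset N_G(u)\setminus\{v\}$ is a proper inclusion. Hence the right-hand side equals $\sum x(w)$ over the nonempty set $\bigl(N_G(u)\setminus\{v\}\bigr)\setminus\bigl(N_G(v)\setminus\{u\}\bigr)$. This sum is strictly positive because the Perron vector is positive, so $x(u)>x(v)$.

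For (ii), the hypothesis $N_G(v)\subseteq N_G[u]$ gives $N_G(v)\setminus\{u\}\subseteq N_G(u)$. Moreover, $v\notin N_G(v)$, so in fact $N_G(v)\setminus\{u\}\subseteq N_G(u)\setminus\{v\}$. Symmetrically, $N_G(u)\subseteq N_G[v]$ gives the reverse inclusion. The two sets therefore coincide, the right-hand side vanishes, and $x(u)=x(v)$.

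The only point needing care, and the step I would check most carefully, is the bookkeeping of the possible edge $uv$ through $\varepsilon$. Beyond that, the argument only requires $\lambda(G)+\varepsilon>0$, which holds since $\lambda(G)>0$.
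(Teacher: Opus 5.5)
Your proof is correct. The identity $(\lambda(G)+\varepsilon)(x(u)-x(v))=\sum_{w\in N_G(u)\setminus\{v\}}x(w)-\sum_{w\in N_G(v)\setminus\{u\}}x(w)$ settles both parts, and you rightly read $\subset$ in (i) as a proper inclusion, which (i) needs in order to hold at all. The paper gives no proof of its own and simply quotes the lemma from \cite{TFZ}; your comparison of the eigen-equations at $u$ and $v$ is the standard way this fact is established.
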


We present a classical result concerning upper bounds for the spectral radius of a graph.

\begin{lemma}(\cite{HSF,N})\label{le:7} \ Let $G$ be a graph on $n$ vertices and $m$ edges with minimum degree $\delta(G) \geq 1$. Then
	$
	\lambda(G) \leq \frac{\delta(G) - 1}{2} + \sqrt{2e(G) - n\delta(G) + \frac{(\delta(G) + 1)^2}{4}},
	$
	with equality if and only if $G$ is either a $\delta(G)$-regular graph or a bidegreed graph in which each vertex is of degree either $\delta$ or $n - 1$.
\end{lemma}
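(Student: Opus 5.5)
The plan is to follow the Hong--Shu--Fang argument: compare $\lambda^2-(\delta-1)\lambda$ with a row sum of $A^2-(\delta-1)A$ through the Perron vector. Write $\delta=\delta(G)$, $m=e(G)$, $A=A(G)$ and $\lambda=\lambda(G)$.

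\textbf{Step 1 (reduction to the connected case).} First assume $G$ is connected; the disconnected case is treated at the end. By the Perron--Frobenius theorem there is a positive unit eigenvector $\mathbf{x}$ for $\lambda$. Put $B=A^2-(\delta-1)A$. Then $B\mathbf{x}=(\lambda^2-(\delta-1)\lambda)\mathbf{x}$, and the target inequality is equivalent to
\[
\lambda^2-(\delta-1)\lambda\le 2m-n\delta+\delta .
\]
Indeed, solving the quadratic gives exactly $\lambda\le \frac{\delta-1}{2}+\sqrt{2m-n\delta+\frac{(\delta+1)^2}{4}}$.

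\textbf{Step 2 (row sums).} For a vertex $u$, the $u$-th row sum of $B$ is
\[
\sum_{v\sim u}d_v-(\delta-1)d_u .
\]
Split the total degree as
\[
\sum_{v\sim u}d_v=2m-d_u-\sum_{w\notin N[u]}d_w\le 2m-d_u-\delta(n-1-d_u).
\]
Substituting, every row sum of $B$ is at most $2m-\delta(n-1)=2m-n\delta+\delta$, and this bound is independent of $u$. This is the key cancellation: the $d_u$ terms disappear exactly when one subtracts $(\delta-1)A$.

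\textbf{Step 3 (from row sums to the eigenvalue).} This is the main obstacle. $B$ need not be nonnegative: for adjacent $u,v$ the entry is $|N(u)\cap N(v)|-(\delta-1)$, which can be negative. So "eigenvalue $\le$ maximum row sum" cannot be quoted directly. The first thing I would try is to choose $u$ with $x_u=\max_v x_v$ and expand
\[
(\lambda^2-(\delta-1)\lambda)x_u=\sum_{v\sim u}\Big(\sum_{w\sim v}x_w-(\delta-1)x_v\Big).
\]
For each $v\sim u$, the inner term is $\lambda x_v-(\delta-1)x_v$. Since $d_v\ge\delta$, I would regroup so that each $v\sim u$ contributes $d_v-(\delta-1)\ge 1$ copies of at most $x_u$: the vertex $u$ itself together with $d_v-\delta$ further neighbours, after cancelling $\delta-1$ of the $x_w$'s against $(\delta-1)x_v$. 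The cancellation must be arranged so that only nonnegative coefficients remain, each multiplying an entry bounded by $x_u$. Summing over $v$ should then reproduce the row-sum count of Step 2, giving the desired inequality after dividing by $x_u>0$.

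If this pairing turns out to be delicate, the fallback is Nikiforov's walk-counting argument. There one bounds the number of walks of length $k+1$ by $(2m-n\delta+\delta)\cdot\#\{\text{walks of length }k-1\}+(\delta-1)\cdot\#\{\text{walks of length }k\}$, using the same degree-splitting estimate, and then lets $k\to\infty$.

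\textbf{Step 4 (equality and the disconnected case).} Equality forces $x$ to be constant wherever it is used in the regrouping, and forces equality in Step 2. The latter means every non-neighbour of each relevant vertex has degree exactly $\delta$. From these two conditions one reads off that $G$ is $\delta$-regular, or that every vertex has degree $\delta$ or $n-1$. Conversely, a direct computation shows that both families attain equality.

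For disconnected $G$, apply the connected case to a component $H$ with $\lambda(H)=\lambda(G)$, of order $n_1$ and size $m_1$. Since $\delta(H)\ge\delta$, one must then check that the right-hand side is monotone in the appropriate sense. The bound for $H$ with parameter $\delta$ still holds, because the expression is decreasing in $\delta$ whenever $\delta\le\lambda+1$. Moreover, every other component contributes at least $\delta n_2/2$ edges, and
\[
2m-n\delta\ge 2m_1-n_1\delta+n_2\delta-n_2\delta\ge 2m_1-n_1\delta .
\]
Hence the bound for $G$ follows from the bound for $H$.
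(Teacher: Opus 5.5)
The paper does not prove this lemma (it is quoted from \cite{HSF,N}), so your argument has to stand on its own. Its main line in Step~3 does not work. The regrouping needs, for each $v\sim u$, the bound $\sum_{w\sim v}x(w)-(\delta-1)x(v)\le (d_v-\delta+1)\,x(u)$. Cancelling $\delta-1$ of the $x(w)$ against $(\delta-1)x(v)$ requires $x(w)\le x(v)$, which nothing provides, and $x(v)\le x(u)$ pushes in the wrong direction. Worse, the inequality you aim for after summing over $v$, namely $(\lambda^2-(\delta-1)\lambda)\,x(u)\le \big(\sum_{v\sim u}d_v-(\delta-1)d_u\big)x(u)$ at a vertex $u$ maximizing $x$, is false. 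Take $K_{2,3}$, with $\delta=2$ and $\lambda=\sqrt6$. The largest Perron entries sit on the two vertices of degree $3$, whose row sum of $B$ is $3\cdot 2-3=3$. Yet $\lambda^2-\lambda=6-\sqrt6\approx 3.55$, which is a weighted average of the row sums $3$ and $4$: it stays below $c=2m-n\delta+\delta=4$, but not below the row sum at the maximizing vertex. So this is not a delicate pairing but a dead end. The equality analysis in Step~4 that rests on it is also wrong: ``$x$ constant'' cannot be the equality condition, since $K_1\vee C_4$ attains equality with a non-constant Perron vector.

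The missing observation is that the negative entries of $B$ do no harm, because $B$ is symmetric. Step~2 says $B\mathbf{1}\le c\,\mathbf{1}$ entrywise, and $\mathbf{x}\ge 0$ is also a left eigenvector. Hence $(\lambda^2-(\delta-1)\lambda)\,\mathbf{x}^T\mathbf{1}=\mathbf{x}^TB\mathbf{1}\le c\,\mathbf{x}^T\mathbf{1}$, which gives the inequality for every graph at once. Any nonzero nonnegative Perron vector works, so your reduction to components is unnecessary.

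Your walk-counting fallback is the same computation with $\mathbf{1}^TA^{k-1}$ in place of $\mathbf{x}^T$, and it is valid. The limit needs care, though: for bipartite $G$ the ratios $w_{k+1}/w_k$ oscillate. Argue instead, for example, that $w_k\le C\rho^k$ by induction, where $\rho^2=(\delta-1)\rho+c$.

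For connected $G$ one has $\mathbf{x}>0$, so equality holds iff $B\mathbf{1}=c\,\mathbf{1}$. That is, every non-neighbour of every vertex has degree $\delta$, or equivalently every vertex of degree larger than $\delta$ is adjacent to all others. This is exactly the stated characterization.

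For disconnected graphs, however, the equality clause is false, not merely unproved. $K_3\cup K_2$ has $\delta=1$ and bound $\sqrt{8-5+1}=2=\lambda$. The clause must be restricted to connected graphs, as in Hong--Shu--Fang and in every application in the paper. Also, in your reduction the monotonicity in $\delta$ comes from $\lambda\le n_1-1$, or from Lemma~\ref{le:8} with $p=n_1$, $q=m_1$, not from $\delta\le\lambda+1$.
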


\begin{lemma}(\cite{HSF,N})\label{le:8} \ For nonnegative integers $p$ and $q$ with $2q \leq p(p - 1)$ and $0 \leq x \leq p - 1$, the function
	$
	f(x) = \frac{x - 1}{2} + \sqrt{2q - px + \frac{(1 + x)^2}{4}}
	$ 
	is decreasing with respect to $x$.
\end{lemma}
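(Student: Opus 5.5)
The plan is to prove Lemma \ref{le:8} by elementary calculus: differentiate $f$ and show that $f'(x)\le 0$ wherever $f$ is defined on $[0,p-1]$. Write
$$g(x)=2q-px+\frac{(1+x)^2}{4},$$
so that $f(x)=\frac{x-1}{2}+\sqrt{g(x)}$. Completing the square gives
$$g(x)=\Big(p-\frac{1+x}{2}\Big)^2-\big(p^2-p-2q\big).$$
The hypothesis $2q\le p(p-1)$ says exactly that the subtracted constant $p^2-p-2q$ is nonnegative. This rewriting is the key identity, and I would establish it first.

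Next I will record two facts about $g$ on $[0,p-1]$.
\begin{enumerate}
\item[(a)] Its derivative is $g'(x)=-p+\frac{1+x}{2}\le -\frac p2<0$, so $g$ is strictly decreasing. Hence the set where $g\ge 0$ (the natural domain of $f$) is an initial subinterval of $[0,p-1]$. The statement is to be read on that subinterval; in the applications the radicand is nonnegative anyway.
\item[(b)] Since $x\le p-1$, we have $p-\frac{1+x}{2}\ge \frac p2\ge 0$.
\end{enumerate}
By (b) together with the completed square, $0\le \sqrt{g(x)}\le p-\frac{1+x}{2}$.

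Then I compute, at points where $g(x)>0$,
$$f'(x)=\frac12+\frac{g'(x)}{2\sqrt{g(x)}}=\frac12-\frac{p-\frac{1+x}{2}}{2\sqrt{g(x)}}.$$
By the bound just obtained, the second term has absolute value at least $\frac12$, so $f'(x)\le 0$. At a possible endpoint where $g=0$, continuity of $f$ settles it. Therefore $f$ is (weakly) decreasing.

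The only subtle point is the equality case $2q=p(p-1)$. There $\sqrt{g(x)}=p-\frac{1+x}{2}$ exactly, so $f\equiv p-1$ is constant. Thus "decreasing" must be understood in the non-strict sense, which is all that is needed when the lemma is applied together with Lemma \ref{le:7}. When $2q<p(p-1)$, the inequality $f'(x)<0$ is strict and $f$ is strictly decreasing. I expect no real obstacle beyond being careful about the domain of $f$ and this equality case.
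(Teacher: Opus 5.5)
Your proof is correct: the completed square $g(x)=\bigl(p-\frac{1+x}{2}\bigr)^2-(p^2-p-2q)$, together with $p-\frac{1+x}{2}\ge \frac p2\ge 0$ on $[0,p-1]$, gives $\sqrt{g(x)}\le p-\frac{1+x}{2}=-g'(x)$ and hence $f'(x)\le 0$. Your remarks on the domain of $f$ and on the case $2q=p(p-1)$ are also accurate: there $f\equiv p-1$, so ``decreasing'' must mean non-increasing, and that is all the paper needs when it replaces $\delta(G)$ by the smaller value $a+r$ in Lemma~\ref{le:7}. The paper itself gives no proof of this lemma and simply quotes it from \cite{HSF,N}, so there is no argument of its own to compare against; your derivative computation is a complete, self-contained proof.
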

 Let $M$ be a real $n \times n$ matrix. Assume that $M$ can be written as the following matrix
\[
M =
\begin{pmatrix}
	M_{1,1} & M_{1,2} & \cdots & M_{1,m} \\
	M_{2,1} & M_{2,2} & \cdots & M_{2,m} \\
	\vdots & \vdots & \ddots & \vdots \\
	M_{m,1} & M_{m,2} & \cdots & M_{m,m}
\end{pmatrix},
\]
whose rows and columns are partitioned into subsets $X_1, X_2, \dots, X_m$ of $\{1, 2, \dots, n\}$. The quotient matrix $R(M)$ of the matrix $M$ (with respect to the given partition) is the $m \times m$ matrix whose entries are the average row sums of the blocks $M_{i,j}$ of $M$. The above partition is called \textit{equitable} if each block $M_{i,j}$ of $M$ has constant row (and column) sum.

\begin{lemma}(\cite{B})\label{le:12}
	Let $M$ be a real symmetric matrix and let $R(M)$ be its equitable quotient matrix. Then the eigenvalues of the quotient matrix $R(M)$ are eigenvalues of $M$. Furthermore, if $M$ is nonnegative and irreducible, then the spectral radius of the quotient matrix $R(M)$ equals the spectral radius of $M$.
\end{lemma}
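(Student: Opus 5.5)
The plan is to encode the partition by its characteristic matrix and to translate equitability into a single matrix identity, from which both claims follow. Let $P$ be the $n\times m$ matrix with $P_{v,i}=1$ if $v\in X_i$ and $P_{v,i}=0$ otherwise. The columns of $P$ have disjoint nonempty supports, so $P$ has full column rank $m$. The key identity is
\[
MP = P\,R(M).
\]
To check it, look at row $v\in X_i$ and column $j$. The entry $(MP)_{v,j}=\sum_{w\in X_j}M_{v,w}$ is the row sum of row $v$ in the block $M_{i,j}$. By equitability this equals the constant $R(M)_{i,j}$, which is $(P\,R(M))_{v,j}$.

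For the first claim, let $R(M)\mathbf{y}=\theta\mathbf{y}$ with $\mathbf{y}\neq 0$. Then
\[
M(P\mathbf{y})=P\,R(M)\mathbf{y}=\theta P\mathbf{y},
\]
and $P\mathbf{y}\neq 0$ because $P$ has full column rank. Hence $\theta$ is an eigenvalue of $M$, so every eigenvalue of $R(M)$ is an eigenvalue of $M$.

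For the spectral radius claim, assume $M$ is nonnegative and irreducible, with spectral radius $\rho$. By the Perron--Frobenius theorem there is a positive vector $\mathbf{x}$ with $M\mathbf{x}=\rho\mathbf{x}$. This step uses the symmetry of $M$: transposing the key identity gives
\[
P^{T}M=(MP)^{T}=R(M)^{T}P^{T}.
\]
Put $\mathbf{z}=P^{T}\mathbf{x}$. Its $i$-th entry is $\sum_{v\in X_i}x(v)>0$, so $\mathbf{z}$ is a positive vector, and
\[
R(M)^{T}\mathbf{z}=P^{T}M\mathbf{x}=\rho\,\mathbf{z}.
\]
Thus $\rho$ is an eigenvalue of $R(M)^{T}$, and therefore of $R(M)$, since a matrix and its transpose have the same characteristic polynomial.

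By the first part, every eigenvalue $\theta$ of $R(M)$ is an eigenvalue of $M$. Since $\rho$ is the spectral radius of $M$, this gives $|\theta|\le\rho$. Together with $\rho$ being an eigenvalue of $R(M)$, the spectral radius of $R(M)$ equals $\rho$.

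The only point needing care is the final step. Irreducibility of $R(M)$ is not assumed, so we cannot simply apply Perron--Frobenius to $R(M)$. The argument avoids this by pushing the positive Perron vector of $M$ down through $P^{T}$, which is where the symmetry of $M$ is used. Everything else is a direct verification.
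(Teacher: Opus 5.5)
Your proof is correct. The paper gives no proof of this lemma; it is quoted from \cite{B}. The standard argument there handles the first part exactly as you do, lifting an eigenvector $\mathbf{y}$ of $R(M)$ to the eigenvector $P\mathbf{y}$ of $M$ via $MP=PR(M)$.

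For the spectral radius, the usual route runs in the opposite direction from yours. Since $R(M)$ is nonnegative, the weak Perron--Frobenius theorem gives a nonnegative eigenvector $\mathbf{y}\neq 0$ of $R(M)$ for $\rho(R(M))$. Then $P\mathbf{y}$ is a nonnegative eigenvector of $M$, and for nonnegative irreducible $M$ the only eigenvalue with a nonnegative eigenvector is $\rho(M)$. You instead push the positive Perron vector of $M$ down by $P^{T}$, then use your first part to bound the remaining eigenvalues of $R(M)$.

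What each route buys:
\begin{itemize}
\item Your version needs only the existence of a positive Perron vector of $M$, not the uniqueness statement about nonnegative eigenvectors.
\item The lifting version uses no symmetry. It also shows for free that the Perron vector of $M$ is constant on each cell $X_i$, because the Perron eigenspace is one-dimensional.
\end{itemize}

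Two minor remarks. First, symmetry is not essential in your argument either: multiplying $MP=PR(M)$ on the left by a positive left Perron vector of $M$ works for nonsymmetric $M$. Second, your closing comment is overly cautious. The weak Perron--Frobenius theorem needs no irreducibility, and in fact $R(M)$ is irreducible whenever $M$ is. Indeed, $M_{vw}>0$ with $v\in X_i$, $w\in X_j$ forces $R(M)_{ij}>0$, so strong connectivity passes to the quotient.
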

\section{Proof of Theorem \ref{T1}}

In this section, we first prove Theorem \ref{T1}, which implies that Conjecture \ref{c1} holds when $G$ is connected and $b\ge 2r+2$.

\begin{proof}[\bf Proof of Theorem~\ref{T1}]  %
	
Suppose that $G$ achieves the maximal spectral radius among all connected graphs that are not fractional $ID$-$[a,b]$-factor critical, where $b\ge a\ge1$, $b\ge 2r+2$ and $r\ge1$. By Lemma \ref{le:1}, $H_{n}^{a,r}$ is not fractional $ID$-$[a,b]$-factor critical.  It follows that we aim to prove $G\cong H_{n}^{a,r}$. By Corollary \ref{co1}, there exist disjoint subsets $S, T$ and independent set $I$ of $V (G)$ with $|S| = s$, $|T|=t$ and $|I|=r$ such that
\begin{equation}\label{eq:1}
 \sum_{v \in T} d_{G-I-S}(v) \leq at - bs  - 1.
\end{equation}
 
 Since $K_{n-r-1}$ is a proper subgraph of $H_{n}^{a,r}$ and the maximality of $\lambda(G)$, by Lemma \ref{le:2} we obtain
 \begin{equation}\label{eq:2}
 	\lambda(G) \geq \lambda(H_{n}^{a,r}) > \lambda(K_{n-r-1}) = n - r - 2.
 \end{equation}
	
	By Lemma \ref{le:2} and the maximality of $\lambda(G)$, it is easy to deduce that $d_G(v) = n-r$ for $v\in I$.
	We first divide the proof into two cases according to the value of $d_G(v)$ for $v\in G$.
	
	{\bf Case 1.} There exists a vertex $u\in V(G)$ with $d_{G}(u)\le a+r-1$.
	
Since $d_G(v) = n-r$ for $v\in I$, we obtain $d_{G-I}(u)\le a-1$. Hence, we can deduce that $G-I$ is a spanning subgraph of $K_{a-1}\vee (K_{n-a-r}\cup K_1)$, which implies $G$ is a spanning subgraph of $H_{n}^{a,r}$. By Lemma \ref{le:2}, we obtain $\lambda(G)\le \lambda(H_{n}^{a,r}).$ By (\ref{eq:2}), $\lambda(G)\ge \lambda(H_{n}^{a,r})$, hence $G\cong H_{n}^{a,r}$.
	
	{\bf Case 2.}  $d_G(v)\ge a+r$ for any $v\in V(G)$.
	
By Lemma \ref{le:1}, we have $ 1 \leq s \leq t  - 1$ and $t \geq b + 1.$

	{\bf Claim 1.} $e(\overline{G}) \leq (r+1)(n-\frac{1}{2}(r+3))$. 

	By Lemma \ref{le:3} and (\ref{eq:2}), we obtain
		$n-r-2 \le \lambda(G) \le \sqrt{2e(G) - n + 1},$
	which implies $e(G)>\frac{1}{2}n^2-(r+\frac{3}{2})n+\frac{1}{2}(r+1)(r+3).$
	Hence $e(\overline{G})\le  (r+1)(n-\frac{1}{2}(r+3))$.$\hfill$ $\quad\Box$
	
	Let $W = V(G)\setminus(I\cup S \cup T).$ By Lemma \ref{le:4}, we have
		$	e(G)\leq at-bs-1 + st + \frac{(n -r- t)(n -r- t - 1)}{2}+r(n-r).$
	We now prove the following claim. 
	
	{\bf Claim 2.} $t\le 2(a+r+1)$.
	
	Otherwise, $t\ge 2a+2r+3$. By Lemma \ref{le:3}, $n\ge s+t+r$, $b\ge a\ge1$, $b\ge 2r+2$ we obtain
	\begin{align*}
		 \lambda(G)& \le \sqrt{2e(G) - n + 1} \\
		&\le \sqrt{2(at-bs-1 + st + \frac{(n -r- t)(n -r- t - 1)}{2}+r(n-r)) - n + 1}\\
		&=\sqrt{(n-r-2)^2-f(n)},
	\end{align*}
	where $f(n)=2n(t-r-1)-(t+2s+2a+1)t-(2t-3)r+2bs+2r^2+5.$ We now prove that $f(n)>0$.
	\begin{align*}
		f(n)& =2n(t-r-1)-(t+2s+2a+1)t-(2t-3)r+2bs+2r^2+5 \\
		&\ge t^2-(2a+2r+3)t+2(b-r-1)s+r+5 \quad \text{(since $n\ge s+t+r$)}\\
		&\ge t^2-(2a+2r+3)t+r+5>0 \quad \text{(since $t\ge 2a+2r+3$ and $b\ge 2r+2$)}.
	\end{align*}
	Hence $\lambda(G) \le \sqrt{(n-r-2)^2-f(n)}<n-r-2$, which contradicts (\ref{eq:2}).$\hfill$ $\quad\Box$
	
	Let $e_1$ denote the non-edges inside $E_G(T, W) \cup E_G(T)$. Clearly, $e_1\le e(\overline{G})$. By Claim 1, we obtain 
	$\sum_{v \in T} d_{G-I-S}(v) \geq (n-r-s-1)t - 2e_1 \geq (n-r-s-1)t -(r+1)(2n-(r+3)).$
	
	By Lemma \ref{le:4} and Claim 2, we have $s\le t-1$, $b+1\le t\le 2(a+r+1)$.
	Recall that $n \geq s + t+r$, $b\ge a\ge 1$, $b\ge 2r+2$ and $n \geq (r+1)(b-r)+2a^2+2a(r+2)-1 $, we have
	\begin{align*}
		&\quad bs - at + \sum_{v \in T} d_{G-I-S}(v) \\
		&\geq bs - at +  (n-r-s-1)t -(r+1)(2n-(r+3))\\
		&\ge bs - at +  (n-r-s-1)(b+1) -(r+1)(2n-(r+3))\\
		&=(b-2r-1)n-(r+1)b+(r+3)r-s-at+2 \\
		&\ge n-(r+1)b + (r+3)r-(a+1)t+3 \quad \text{(since $b\ge 2r+2$ and $s\le t-1$)}\\
		&\geq n - (r+1)b + (r+3)r-2(a+1)(a+r+1)+3 \geq 0,
	\end{align*}
	which contradicts (\ref{eq:1}).
	
	This completes the proof of Theorem \ref{T1}.
\end{proof}

\section{Proof of Theorem \ref{T2}}
Before proving Theorem \ref{T2}, we first prove some lemmas. Let $ \mathscr{F}_{n}^{a,b,r}$ be the family of graphs $G$  obtained from
$rK_1\vee (K_{a} \vee ( K_{n-(a+b+r+1)} \cup (b+1)K_1 ))$ by adding $a-1$ edges between $V((b+1)K_1)$ and $a-1$ vertices in $V(K_{n-(a+b+r+1)})$, where $ \mathscr{F}_{n}^{a,b,r}$ is shown in Figure 2.

\begin{lemma}\label{le:2.1}
	Let $a,b,r,n$ be positive integers with $b\ge \max\{r,a\}\ge1$, $n\ge 2(b+r+2)^2$ and let $G\in  \mathscr{F}_{n}^{a,b,r}$. Then
	$\lambda(G) > n - b -\frac{5}{2}.$
\end{lemma}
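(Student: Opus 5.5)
The cheapest route is monotonicity. Every $G\in\mathscr{F}_{n}^{a,b,r}$ contains the spanning subgraph $G_0=rK_1\vee\big(K_a\vee(K_{n-(a+b+r+1)}\cup(b+1)K_1)\big)$. Since $G_0$ is connected and, by Lemma \ref{le:2}, $\lambda(G)\ge\lambda(G_0)$, it suffices to prove $\lambda(G_0)>n-b-\frac52$. (If the margin turns out too thin, I would instead keep the $a-1$ extra edges, but I expect $G_0$ to suffice.)

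Next I will compute $\lambda(G_0)$ with an equitable partition and Lemma \ref{le:12}. Use the four parts $I$ (size $r$), $K_a$ (size $a$), $K_{n-a-b-r-1}$ (size $m:=n-a-b-r-1$), and $(b+1)K_1$. The quotient matrix is
\[
R=\begin{pmatrix} 0 & a & m & b+1\\ r & a-1 & m & b+1\\ r & a & m-1 & 0\\ r & a & 0 & 0\end{pmatrix}.
\]
By Lemma \ref{le:12}, $\lambda(G_0)$ is the largest root of $\varphi(x)=\det(xI_4-R)$.

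To show the largest root exceeds $\theta:=n-b-\frac52$, it is enough to show $\varphi(\theta)<0$, because $\varphi$ is monic of degree 4 and is positive beyond its largest root. Directly expanding a $4\times4$ determinant at $\theta$ is messy, so I would first use the eigen-equations to eliminate variables. Let $x_1,x_2,x_3,x_4$ be the Perron entries on the four parts. Then
\[
\lambda x_4=rx_1+ax_2,\qquad \lambda x_1=ax_2+mx_3+(b+1)x_4,\qquad \lambda x_2=rx_1+(a-1)x_2+mx_3+(b+1)x_4 .
\]
The last two give $(\lambda+1)x_2=(\lambda+r)x_1$, so $x_1$ and $x_2$ are essentially equal. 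The heavy clique part gives $(\lambda-m+1)x_3=rx_1+ax_2$. This reduces everything to a scalar equation. An alternative is to expand $\varphi(x)$ explicitly and substitute $x=\theta$, writing the result as a polynomial in $n$ with leading term of order $n^3$.

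Heuristically, $\lambda\approx m+a+r-1+\varepsilon=n-b-2+\varepsilon$. The $(b+1)$ pendant-type vertices contribute a correction $\varepsilon\approx (b+1)(a+r)/n^{2}$-ish, which is positive. The deficit relative to $\theta$ is therefore about $\frac12$ minus an $O((b+r)^2/n)$ loss, and this is where the hypothesis $n\ge 2(b+r+2)^2$ enters.

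\textbf{Main obstacle.} The hard step is the sign computation $\varphi(n-b-\frac52)<0$. It requires carefully collecting the coefficients of $n^3,n^2,\dots$ and bounding the lower-order terms using $b\ge\max\{r,a\}$ and $n\ge2(b+r+2)^2$. I would first verify that the leading term is $-\frac12 n^3$ (up to a positive factor). Then I would bound each remaining term in absolute value by $c(b+r+2)^2n^2$ with a small constant $c$, so that the quadratic bound on $n$ closes the argument.
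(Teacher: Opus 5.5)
Your proposal is correct and follows the paper's proof exactly. You pass to the spanning subgraph $rK_1\vee(K_a\vee(K_{n-(a+b+r+1)}\cup(b+1)K_1))$, take its equitable quotient matrix, and show the characteristic polynomial is negative at $n-b-\frac52$; the paper does the same, confirms the leading term $-\frac12 n^3$, and then bounds the lower-order terms using $b\ge\max\{r,a\}$ and $n\ge 2(b+r+2)^2$.

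For the step you flagged as the main obstacle, expanding in $\theta=n-b-\frac52$ rather than in $n$ gives
\[
\varphi(\theta)=-\tfrac12\theta^3+\bigl(r^2-\tfrac32 r-\tfrac12\bigr)\theta^2-\bigl[(a+r-\tfrac12)(a+ab+rb)+r(a+1)\bigr]\theta-r(a+1)(b+1)(a+r-\tfrac12).
\]
Since $\theta>0$, the last two terms are negative, so negativity follows as soon as $\theta>2r^2-3r-1$. This is far weaker than the stated bound on $n$.
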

\begin{proof}
	It is easy to see that $rK_1\vee (K_{a} \vee ( K_{n-(a+b+r+1)} \cup (b+1)K_1 ))$ is a proper subgraph of $G$. By Lemma \ref{le:2}, we have $\lambda(G) > \lambda(rK_1\vee (K_{a} \vee \left( K_{n-(a+b+r+1)} \cup (b+1)K_1 \right)))$.
	
	Note that the partition $ V(K_{a})\cup V(rK_{1}) \cup V(K_{n-(a+b+r+1)}) \cup V((b+1)K_1)$ is an equitable partition of $ V(rK_1\vee (K_{a} \vee \left( K_{n-(a+b+r+1)} \cup (b+1)K_1 \right)))$. Then the quotient matrix of $G$ corresponding to the partition is
	\[
	M_1  =
	\begin{pmatrix}
		a-1 & r & n-a-b-r-1& b+1 \\
		a & 0 & n-a-b-r-1& b+1 \\
		a & r & n-a-b-r-2& 0 \\
		a & r & 0 & 0
	\end{pmatrix}.
	\]
	The characteristic polynomial of $M_1$ is 
	\begin{align*}
		f(x) &= x^{4} - (n-b-r-3) x^{3} - \bigl((r+1)n-r^2-r+a(b+1)-b-2\bigr) x^{2}\\
		&-\bigl(r( b(b-n+r+3)+2)+a^{2}(b+1) +a(b+1)(2+b-n+3r)\bigr) x\\
		&- (a+1)(b+1)(a+b+r+2-n)r
	\end{align*}
	Let $\lambda(M_1)$ be the largest root of the equation $f(x)=0$. We prove that $f(n - b -\frac{5}{2})<0$.
	\begin{align*}
		f(n - b -\frac{5}{2}) =-\frac{n^3}{2}+  \frac{n^2}{4}( 4 r^2+6 b - 6 r +13 ) + \frac{1}{8}nf_1(r)+\frac{1}{16}f_2(r),
	\end{align*}
	where $$f_1(r)= - 8 a^2 (b+1) - 4 b (3 b+13) + 52 r + 28 b r - 
	8 (3 b+5) r^2 - 4 a (b+1) (4r-1)-55,$$
	\begin{align*}
		f_2(r)& =8 a^2 (b+1) ( 2 b - 2 r+5) + ( 2 b+3) (( 2 b+5)^2 - 
		2 (8 b+17) r + 4 ( 4 b+7) r^2)\\
		& + 4 a ( b+1) ( 2 (9 - 2 r) r + b (8 r-2)-5).
	\end{align*}
	We first prove that $f_1(r)<0$. Based on direct computation, $f'_1(r)=52 + 28 b - 16 a (b+1) - 16 (3 b+5) r<0$ due to $b\ge \max\{r,a\}\ge1$. Hence $f_1(r)\le f_1(1)=-43 - 12 a (b+1) - 8 a^2 (b+1) - 12 b (b+4)<0$.
	
	Next we prove that $f_2(r)\le f(b)$	for $r\le b$. Based on direct computation, 
	\begin{align*}
		f'_2(r)& =8r( (2 b+3) (4 b+7)-4 a (b+1))
		+8 a (b+1) ( 4 b+9 - 2 a ) - 
		2 (2 b+3) (8 b+17) \\
		&\ge  2 (2 b+3) ( 8 b+11)+8 a (b+1) ( 4 b+5 - 2 a )>0 \quad (\text{since } r\ge 1).
	\end{align*}
	Hence $f_2(r)\le f_2(b)=32 b^4+16 (a+5) b^3+ 20(4a+1) b^2+ 4(10 a^2+11a+2) b +5 (8 a^2+15 - 4 a )   $
	Hence 	\begin{align*}
		f(n - b -\frac{5}{2}) <-\frac{n^3}{2}+  \frac{n^2}{4}( 4 r^2+6 b  +13 ) +f_2(b) =: g(n).
	\end{align*}
	It is easy to verify that $g(n)\le g(2(b+r+2)^2)$ for $n\ge 2(b+r+2)^2$, then
{\small	\begin{align*}
		g(2(b+r+2)^2)&=-8(b + 2) r^{5}-(6b(6b+23)+131)r^4 - 4(b + 2)(2b + 3)(8b + 17) r^{3} \\
		&- 2(b + 2)^{2}(28b^{2} + 94b + 73) r^{2}
		- 4(b + 2)^{3}\bigl(6b(b + 3) + 11\bigr) r  \\
		& + 40a^{2}(b + 1) + 4a(b + 1)\bigl(4b(b + 4) - 5\bigr)+27\\
		& - b\Bigl(248 + b\bigl(436 + b(312 + b(147 + 42b + 4b^{2}))\bigr)\Bigr)\\
		&\le
		- 4(b + 2)^{3}\bigl(6b(b + 3) + 11\bigr) r + 40a^{2}(b + 1) + 4a(b + 1)\bigl(4b(b + 4) - 5\bigr)+27 .
	\end{align*}}
	Let $y(a)=- 4(b + 2)^{3}\bigl(6b(b + 3) + 11\bigr) r + 40a^{2}(b + 1) + 4a(b + 1)\bigl(4b(b + 4) - 5\bigr)+27 $. Then $y'(a)= 16 b^3+ 80 b^2 + (80 a+44) b +20 ( 4 a-1)>0$. Since $a\le b$, we obtain
	$y(a)\le y(b)=    - 24 b^5- 200 b^4- 644 b^3 - 1236 b^2- 1124 b-325<0.$
	Hence $$	f(n - b -\frac{5}{2})< g(2(b+r+2)^2)<y(a)\le y(b)<0,$$
	which implies that $\lambda(M_1)>n - b -\frac{5}{2}$. By Lemmas  \ref{le:2} and  \ref{le:12}, we have 
	
	$\lambda(G) > \lambda(rK_1\vee (K_{a} \vee \left( K_{n-(a+b+r+1)} \cup (b+1)K_1 \right)))=\lambda(M_1)>n - b -\frac{5}{2}.$
\end{proof}
\begin{lemma}\label{le:2.2}
	Let $r,b,a,n$ be positive integers with $r \geq 1$, $b\ge a\ge r+2$ and $n \geq a+(a+r+2)b$. If $G \in \mathscr{F}_{n}^{a,b,r}$, then
	$\lambda(G) \leq \lambda(F_{n}^{a,b,r}),$
	with equality if and only if $G \cong F_{n}^{a,b,r}$.
\end{lemma}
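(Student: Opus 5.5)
Write $G_0=rK_1\vee (K_{a} \vee ( K_{n-(a+b+r+1)} \cup (b+1)K_1 ))$. Let $T=V((b+1)K_1)=\{w_1,\dots,w_{b+1}\}$ and $C=V(K_{n-(a+b+r+1)})$. Every $G\in \mathscr{F}_{n}^{a,b,r}$ is $G_0$ plus a set $E'$ of $a-1$ edges between $T$ and $C$, where the $C$-endpoints are $a-1$ distinct vertices of $C$. Such a $G$ is determined, up to isomorphism, by the multiset of degrees $(k_1,\dots,k_{b+1})$ with $k_i=d_C(w_i)$ and $\sum k_i=a-1$. The graph $F_{n}^{a,b,r}$ is the case $k_1=a-1$ and $k_i=0$ for $i\ge2$.

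\textbf{Plan.} Take $G\in\mathscr{F}_{n}^{a,b,r}$ with maximum spectral radius, and suppose $G\not\cong F_{n}^{a,b,r}$. Then at least two vertices of $T$, say $w_1,w_2$, have $k_1\ge k_2\ge1$. Let $\mathbf{x}$ be the Perron vector of $G$, and assume without loss of generality $x(w_1)\ge x(w_2)$.

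\textbf{Moving edges.} Let $N_C(w_2)=\{v_1,\dots,v_{k_2}\}$. Each $v_j$ has exactly one neighbour in $T$, namely $w_2$, so $v_j\notin N_G(w_1)$. Hence Lemma \ref{le:5} applies with $u=w_1$, $v=w_2$. Deleting $w_2v_j$ and adding $w_1v_j$ for all $j$ gives $G^*$. In $G^*$ we still have $a-1$ edges from $T$ to $a-1$ distinct vertices of $C$, so $G^*\in\mathscr{F}_{n}^{a,b,r}$. Lemma \ref{le:5} gives $\lambda(G^*)>\lambda(G)$, contradicting maximality.

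Thus the maximizer has all $a-1$ extra edges at a single vertex of $T$, i.e. $G\cong F_{n}^{a,b,r}$. Strictness in Lemma \ref{le:5} gives the equality characterization: any $G$ with two vertices of $T$ having positive $C$-degree is strictly beaten, so it cannot attain the maximum.

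\textbf{Expected obstacle.} The only delicate point is that Lemma \ref{le:5} needs $x(u)\ge x(v)$, and we are free to choose which of $w_1,w_2$ plays $u$. So the comparison never has to be proved, and the hypotheses $b\ge a\ge r+2$ and $n\ge a+(a+r+2)b$ seem unnecessary for this argument. Those bounds may instead be meant to support Claim 1 mentioned in the Remark: an estimate $\lambda(F_{n}^{a,b,r})<n-b-1$, proved via an equitable quotient matrix with Lemma \ref{le:12} or via Lemmas \ref{le:7}--\ref{le:8}. If that estimate is part of this lemma's proof, I would bound $\lambda$ using Lemma \ref{le:7} with $\delta=a+r$ and $e(F_{n}^{a,b,r})=\binom{n-r-b-1}{2}+r(n-r)+a(b+1)+a-1$, and check the polynomial inequality using the lower bound on $n$.

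Before committing to the isomorphism-class description, I will also check that the $a-1$ vertices of $C$ must be distinct in the family. If repeats were allowed, the same shifting works after first spreading the endpoints: a vertex of $C$ with two $T$-neighbours $w_1,w_2$ can be handled by the same Lemma \ref{le:5} shift applied to $w_1,w_2$.
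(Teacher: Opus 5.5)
Your shifting argument is correct when every vertex of $C$ receives at most one of the $a-1$ added edges; in that reading it even shows the numerical hypotheses are unnecessary. The gap is the other case, which is the one the paper actually needs and which your fallback does not handle.

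\textbf{Why shared endpoints must be covered.} The family $\mathscr{F}_{n}^{a,b,r}$ has to contain graphs in which several added edges share a $C$-endpoint. In the proof of Theorem~\ref{T2}, the only information available is that exactly $a-1$ edges join $T$ to $V(G)\setminus(S\cup I\cup T)$, and from this alone $G\in\mathscr{F}_{n}^{a,b,r}$ is concluded. The paper's own proof of this lemma spends its whole Case~2 ($d_W(t_1)=l\le a-2$ while $N_W(t_j)\subseteq N_W(t_1)$ for all $j$) on exactly such graphs.

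\textbf{Why your fallback fails.} Suppose $c\in C$ is adjacent to both $w_1$ and $w_2$. Then $c\in N(w_1)$, so Lemma~\ref{le:5} cannot transfer the edge $w_2c$ to $w_1$. The only other way to ``spread'' is to replace $w_2c$ by $w_2c'$, where $c'\in C$ has no $T$-neighbour. That is Lemma~\ref{le:5} with $u=c'$, $v=c$, which needs $x(c')\ge x(c)$. But $N(c')\setminus\{c\}\subsetneq N(c)\setminus\{c'\}$, so Lemma~\ref{le:6}(i) gives $x(c)>x(c')$.

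In fact the two kinds of shift pull in opposite directions. Shifts between $C$-vertices favour piling the added edges onto one $C$-vertex; shifts between $T$-vertices favour piling them onto one $T$-vertex. Consider the graph in which a single $c\in C$ is joined to $a-1$ vertices of $T$. No shift licensed by Lemma~\ref{le:5} within the family applies to it. Yet it is not isomorphic to $F_{n}^{a,b,r}$, since $a\ge r+2\ge 3$. So shifting alone only yields a nested (Ferrers-type) pattern of $T$--$C$ edges, not $F_{n}^{a,b,r}$.

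\textbf{What is missing.} You need a quantitative comparison between such nested patterns and $F_{n}^{a,b,r}$. This is where $b\ge a\ge r+2$ and $n\ge a+(a+r+2)b$ are really used. In the paper's notation, $W$ is your $C$ and $t_1,\dots,t_{b+1}$ are the vertices of $T$ ordered by Perron entries.
\begin{enumerate}
\item[(1)] The paper first shows $n-r-b-2<\lambda<n-b-1$ for members of the family (its Claim~1, via Lemmas~\ref{le:7} and~\ref{le:8}).
\item[(2)] For a nested pattern with $l\le a-2$, it forms $G^*=G-E_2+E_1\cong F_{n}^{a,b,r}$.
\item[(3)] It evaluates $\mathbf{y}^{T}(\lambda(G^*)-\lambda(G))\mathbf{x}=\mathbf{y}^{T}(A(G^*)-A(G))\mathbf{x}$, with $\mathbf{x}$ and $\mathbf{y}$ the Perron vectors of $G$ and $G^*$.
\item[(4)] The estimates $x(s_1)<\frac{10}{9}x(u_1)$, $x(w_{l+1})>\frac{a+\frac{9}{10}r+1}{a+r+2}\,x(w_1)$, and the lower bound on $y(t_1)/y(t_2)$ make this quantity positive.
\end{enumerate}
Without such an estimate, or some other comparison of Perron entries of $T$-vertices against $C$-vertices, your proof covers only the distinct-endpoint special case. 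That is not the version of the lemma that Theorem~\ref{T2} relies on.
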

\begin{proof}
	Suppose that \( G \)  is the extremal graph with the largest spectral radius in $ \mathscr{F}_{n}^{a,b,r}$. Clearly, $F_{n}^{a,b,r} \in \mathscr{F}_{n}^{a,b,r}$. Our aim is to verify that  \( G \cong F_{n}^{a,b,r} \). Since $G\in  \mathscr{F}_{n}^{a,b,r}$, by direct calculation, we obtain
	
	\begin{equation}\label{eql1}
		e(G)= \binom{n-r-b-1}{2} +a(b+1)+a-1+r(n-r).
	\end{equation}
	Since \( K_{n-r-b-1} \) is a proper subgraph of \( G \), by Lemma \ref{le:2}, we obtain 
	\begin{equation}\label{eql2}
		\lambda(G) > \lambda(K_{n-r-b-1}) = n-r-b-2.
	\end{equation}
	
	Now we prove the following claim:
	
	{\bf Claim 1.} $\lambda(G)<n-b-1$.
	
	Since $n \geq a+(a+r+2)b$, $b\ge a\ge r+2$ and $\delta(G)\ge a+r$, by (\ref{eql1}), Lemmas \ref{le:7} and \ref{le:8}, we obtain
	{\footnotesize \begin{align*}
			\lambda(G)& \leq \frac{\delta(G) - 1}{2} + \sqrt{2e(G) - n\delta(G) + \frac{(\delta(G) + 1)^2}{4}}\\
			&\leq \frac{a+r-1}{2} + \sqrt{2\Big(\binom{n-r-b-1}{2} +a(b+1)+a-1+r(n-r)\Big)- n(a+r) + \frac{(a+r+1)^2}{4}} \\
			&= \frac{a+r-1}{2} + \sqrt{\left(n - b - \frac{a+r+1}{2}\right)^2 - (2n-(r+2)b-(b+4)a+r(r-3))} \\
			&< \frac{a+r-1}{2} + \left(n - b - \frac{a+r+1}{2}\right) \quad (\text{since } n \geq a+(a+r+2)b ) \\
			&= n - b - 1.
	\end{align*}}
	Hence $\lambda(G)<n-b-1$. $\hfill$ $\quad\Box$
	
	Let $V(G)=S\cup I\cup T\cup W,$ where $S=V(K_{a})=\{s_{1},s_{2},\ldots,s_{a}\}$, $I=V(rK_1)=\{u_1,u_2,\cdots,u_r\}$, $W=V(K_{n-a-r-b-1})=\{w_{1},w_{2},\ldots,w_{n-a-r-b-1}\}$ and 
	$T=\{V((b+1)K_1)\}=\{t_{1},t_{2},\ldots,t_{b+1}\}$. 
	We use $\mathbf{x}$ to denote the Perron vector of $A(G)$. 
	Without loss of generality, suppose that $x(w_{i})\geq x(w_{i+1})$ and 
	$x(t_{j})\geq x(t_{j+1})$ for $1\leq i\leq n-a-r-b-2$ and $1\leq j\leq b$. 
	Then we have $N_{G}(w_{i+1})\subseteq N_{G}[w_{i}]$ for $a+1\leq i\leq n-r-a-b-2$. Otherwise, there exist $i<j$ such that	$N_{G}(w_{j})\nsubseteq N_{G}[w_{i}]$. Let $v \in N_{G}(w_{j})\setminus N_{G}[w_{i}]$ and let $G^{*} = G - vw_{j} + vw_{i}$. Clearly, $G^{*} \in \mathscr{F}_{n}^{a,b,r}$. Since $x(w_{i}) \geq x(w_{j})$, by Lemma \ref{le:5}, we obtain 	$\lambda(G^{*}) > \lambda(G)$, which contradicts the maximality of $\lambda(G)$. Hence $N_{G}(w_{i+1}) \subseteq N_{G}[w_{i}]$ for $1 \leq i \leq n-a-r-b-2$. 	Similarly, we have $N_{G}(t_{j+1}) \subseteq N_{G}[t_{j}]$ for $1 \leq j \leq b$.
	
	Let $d_{W}(t_{1}) =l$. By the maximality of $\lambda(G)$ and Lemma \ref{le:5}, we obtain $N_{W}(t_{1}) = \{w_{1}, w_{2}, \ldots, w_{l}\}$.  Otherwise, there exists a vertex $w_p\in N_{W}(t_1)$ and  a vertex $w_k\notin N_{W}(t_1)$, where $p\ge l+1$ and $1\le k\le l$. Recall that $x(w_{1}) \geq x(w_{2}) \geq \cdots \geq x(w_{n-a-r-b-1})$, we set $G_1=G-t_1w_p+t_1w_k$. Then $G_1 \in \mathscr{F}_{n}^{a,b,r}$ and $\lambda(G_1)>\lambda(G)$ due to Lemma \ref*{le:5}, which contradicts the maximality of $\lambda(G)$. Since $N_{G}(t_{j+1}) \subseteq N_{G}[t_{j}]$ for $1 \leq j \leq b$ and $e(T)=0$, we obtain $N_{W}(t_{j})\subseteq N_{W}(t_{1}) = \{w_{1}, w_{2}, \ldots, w_{l}\}$ for $2 \leq j \leq b$.
	
	{\bf Case 1.} $l=a-1$.
	
	In this case, note that there are $a-1$ edges between $V((b+1)K_1)$ and $V(K_{n-a-r-b-1})$. If \( l = a-1 \), then it means that these \( a-1 \) edges are exactly the edges connecting \( t_1 \) with \( a-1 \) vertices in \(W \), which implies $G \cong F_{n}^{a,b,r}$, as required.
	
	{\bf Case 2.}  $l\leq a-2$. 
	
	By symmetry, we obtain $x(u_i)=x(u_1)$ for $2\le i\le r$, $x(s_i)=x(s_1)$ for $2\le i\le a$ and $x(w_{i})=x(w_{l+1})$ for $l+2\le i\le n-a-r-b-1$.
	For $s_1\in S$ and $u_1\in I$,	by $A(G)\mathbf{x} = \lambda(G) \mathbf{x}$, we obtain
	$$\lambda(G)x(s_1)=rx(u_1)+(a-1)x(s_1)+\sum_{v\in T}x(v)+\sum_{v\in W}x(v),$$
	$$\lambda(G)x(u_1)=ax(s_1)+\sum_{v\in T}x(v)+\sum_{v\in W}x(v).$$
	Hence 
	\begin{equation*}
		x(s_1)=\Big(\frac{r-1}{\lambda(G)+1}+1\Big)x(u_1)
	\end{equation*}
	By (\ref{eql2}), we obtain $\lambda(G) > n-r-b-2.$ Recall that $b\ge a\ge r+2$ and $n \geq  a+(a+r+2)b$, we easily obtain $\lambda(G)+1 >n-r-b-1> 9r$. Hence
	\begin{equation}\label{eql3}
		x(u_1)\le \Big(\frac{r-1}{\lambda(G)+1}+1\Big)x(u_1)=x(s_1)<\Big(\frac{r}{\lambda(G)+1}+1\Big)x(u_1)<\frac{10}{9}x(u_1).
	\end{equation}
	For $w_{i}\in W$ and $s_1\in S$, since $N_{G}(w_{i}) \subseteq N_{G}[s_1]$, by Lemma \ref{le:6}, we have $x(s_1)>x(w_{i})$. For $w_{l+1}\in W$, by (\ref{eql3}) and $A(G)\mathbf{x} = \lambda(G) \mathbf{x}$, we obtain	
	\begin{equation*}
		\begin{aligned}
			\lambda x(w_{l+1})&=ax(s_1)+rx(u_{1})+x(w_{1})+\sum_{2\leq i\leq l}x(w_{i})+(n-a-r-b-2-l)x(w_{l+1})\\
			&\ge (a+\frac{9}{10}r+1)x(w_{1})+(n-a-r-b-3)x(w_{l+1}).
		\end{aligned}
	\end{equation*}
	By Claim 1, we have $\lambda(G)<n-b-1$. Hence
	\begin{equation}\label{eql4}
		x(w_{l+1})\ge \frac{a+\frac{9}{10}r+1}{\lambda+ a+r+b+3-n}x(w_{1})>\frac{a+\frac{9}{10}r+1}{a+r+2}x(w_{1}).
	\end{equation}
	We construct $F_{n}^{a,b,r}$ by deleting the edges $t_iw_j$($i\ge2$) and connecting these vertices $w_j$ to $t_1$. Recall that $N_{W}(t_{j})\subseteq  \{w_{1}, w_{2}, \ldots, w_{l}\}$ for $1 \leq j \leq b$. Suppose that $E_{1} = \{t_{1}w_{i} \mid l+1 \leq i \leq a-1\}$ and 
	$E_{2} = \{t_{i}w_{j} \in E(G) \mid  2\le i\le b+1,  2\le j \le l\}$. 
	Let $G^{*} = G - E_{2} + E_{1}$. Then $G^{*} \cong F_{n}^{a,b,r}$. 
	Let $\mathbf{y}$ be the Perron vector of $A(G^{*})$.
	Note that
	$y(s_{i}) = y(s_{1})$ for $2 \leq i \leq a$,
	$y(u_{i}) = y(u_1)$ for $2 \leq i \leq r$,
	$y(t_{i}) = y(t_{2})$ for $3 \leq i \leq b+1$,
	$y(w_{i}) = y(w_{1})$ for $2 \leq i \leq a-1$ and
	$y(w_{i}) = y(w_{a})$ for $a+1 \leq i \leq n-a-r-b-1$.
	Similarly to the proof of  (\ref{eql3}), we have
	\begin{equation}\label{eql5}
		y(u_1)\le \Big(\frac{r-1}{\lambda(G^{*})+1}+1\Big)y(u_1)=y(s_1)<\Big(\frac{r}{\lambda(G^{*})+1}+1\Big)y(u_1)<\frac{10}{9}y(u_1).
	\end{equation}
	By $A(G^{*})\mathbf{y} = \lambda(G^{*})\mathbf{y}$ and (\ref{eql4}), we obtain
	\begin{equation*}
		\begin{aligned}
			\lambda(G^{*}) y(w_{a})&=ay(s_1)+ry(u_{1})+(a-1)y(w_{1})+(n-2a-r-b-1)y(w_{a})\\
			&\ge (a+\frac{9}{10}r)y(s_{1})+(n-a-r-b-2)y(w_{a}).
		\end{aligned}
	\end{equation*}
	Hence 
	\begin{equation}\label{eql6}
		y(w_{1})\ge	y(w_{a})\ge \frac{a+\frac{9}{10}r}{\lambda(G^{*})+b+a+r+2-n}y(s_{1}).
	\end{equation}
	Since $\lambda(G^{*}) y(t_{1})=ay(s_1)+ry(u_{1})+(a-1)y(w_{1})$, we have
	$$\lambda(G^{*}) y(t_{1})\ge (a+\frac{9}{10}r)y(s_{1})+(a-1)\frac{a+\frac{9}{10}r}{\lambda(G^{*})+b+a+r+2-n}y(s_{1}).$$
	
	Note that $\lambda(G^{*}) y(t_{2})=ay(s_1)+ry(u_{1})\le (a+r)y(s_1)$ due to (\ref{eql5}) and $\frac{a+\frac{9}{10}r}{a+r}=1-\frac{1}{10}\frac{r}{a+r}>1-\frac{1}{10}=\frac{9}{10}$, we have
	
	\begin{equation}\label{eql7}
		\begin{aligned}
			\frac{y(t_{1})}{ y(t_{2})}=	\frac{\lambda(G^{*}) y(t_{1})}{\lambda(G^{*}) y(t_{2})}&\ge \frac{a+\frac{9}{10}r}{a+r}+\frac{(a-1)(a+\frac{9}{10}r)}{(a+r)(\lambda(G^{*})+b+a+r+2-n)}\\
			&=\frac{a+\frac{9}{10}r}{a+r}\cdot\frac{\lambda(G^{*})+b+2a+r+1-n}{\lambda(G^{*})+b+a+r+2-n}\\
			&>\frac{9}{10}\cdot\frac{\lambda(G^{*})+b+2a+r+1-n}{\lambda(G^{*})+b+a+r+2-n}
		\end{aligned}
	\end{equation}
	Since $x(t_{1})\geq x(t_{i})$ for $2\leq i\leq b+1$, $x(w_{1})\geq x(w_{j})$ for $2\leq j\leq n-a-r-b-1$, by (\ref{eql6}) and (\ref{eql7}), we have
	{\small	\begin{align*}
			& \mathbf{y}^{T}(\lambda(G^{*})-\lambda(G))\mathbf{x} \\
			&= \mathbf{y}^{T}(A(G^{*})-A(G))\mathbf{x}\\
			&= \sum_{t_{1}w_{i}\in E_{1}}(x(t_{1})y(w_{i})+x(w_{i})y(t_{1}))-\sum_{t_{i}w_{j}\in E_{2}}(x(t_{i})y(w_{j})+x(w_{j})y(t_{i})) \\
			&\geq (a-1-l)(x(t_{1})y(w_{1})+x(w_{l+1})y(t_{1})-(a-1-l)(x(t_{2})y(w_{1})+x(w_{1})y(t_{2})) \\
			&\geq (a-1-l)(x(w_{l+1})y(t_{1})-x(w_{1})y(t_{2})) \quad (\text{since } x(t_{1})\geq x(t_{2})) \\
			&>
			(a-1-l)x(w_{1})y(t_{2})\frac{9}{10} \bigg(\frac{a+\frac{9}{10}r+1}{a+r+2}\cdot\frac{\lambda(G^{*})+b+2a+r+1-n}{\lambda(G^{*})+b+a+r+2-n}-\frac{10}{9}\bigg)  (\text{by (\ref{eql6}) and (\ref{eql7})}) \\
			&= (a-1-l)x(w_{1})y(t_{2})\frac{9}{10} \bigg(\frac{f(n)}{9(a+r+2)(\lambda(G^{*})+b+a+r+2-n)}\bigg),
	\end{align*}}
	where $f(n)= \Big(n-\lambda(G^{*})\Big)\left(a + \frac{19r}{10} + 11\right)
	+\frac{- 19r^2+ \left( 52a - 19b - 229 \right) r+10 \left( 8a^2 - 11b - a(b + 13) - 31 \right)}{10}  .$ 
	
	Since $\lambda(G^{*})<n-b-1$, then
	\begin{equation*}
		\begin{aligned}
			f(n)&\ge 8a^2 + a\left(\frac{26r}{5} - 12\right) - \frac{1}{10}(r + 10)(19r + 20)\quad \text{(since $n-\lambda(G^{*})>b+1$)}\\
			&\ge \frac{r}{10} (113 r+94)-12>0\quad \text{(since $a\ge r+2$ and $r\ge1$)}.
		\end{aligned}
	\end{equation*}
	Hence $f(n)>0$, which implies that $\lambda(G^{*})>\lambda(G).$ This contradicts the maximality of $\lambda(G)$. This completes the proof.
\end{proof}

Now, we give the proof of Theorem \ref{T2}.

\begin{proof}[\bf Proof of Theorem~\ref{T2}]  %
Suppose that $G$ achieves the maximal spectral radius among all connected graphs that are not fractional $ID$-$[a,b]$-factor critical, where $b\ge a\ge r+2$ and $r\ge1$. By Corollary \ref{co1}, there exist disjoint subsets $S, T$ and independent set $I$ of $V (G)$ with $|S| = s$, $|T|=t$ and $|I|=r$ such that
\begin{equation}\label{eq:2.1}
	\sum_{v \in T} d_{G-I-S}(v) \leq at - bs  - 1.
\end{equation}
Subject to (\ref{eq:2.1}), we choose $S$ such that $|S|=s$ is as large as possible. By Lemma \ref{le:1}, we have 
\begin{equation}\label{eq:2.2}
	1 \leq s \leq t  - 1, t \geq b + 1.
\end{equation}

Recall that $F_{n}^{a,b,r}$ is not fractional $ID$-$[a,b]$-factor critical. According to the maximality of $\lambda(G)$ and Lemma \ref{le:2.1}, we have
\begin{equation}\label{eq:2.3}
	\lambda(G) \geq \lambda(F_{n}^{a,b,r}) > n-b-\frac{5}{2}.
\end{equation}

Now we prove some claims.

{\bf Claim 1.} $e(G)>\frac{1}{2}n^2-(b+2)n+\frac{1}{8}(2 b+3)  (2 a + 2 b + 2 r+5)$.

By integrating $\delta(G) \geq a+r$, (\ref{eq:2.3}), Lemmas \ref{le:7} and \ref{le:8},  we obtain
$	n-b-\frac{5}{2}<\lambda(F_{n}^{a,b,r}) \le \lambda(G) \le  \frac{a+r-1}{2} + \sqrt{2e(G) - n(a+r) + \frac{(a+r+1)^2}{4}},$
which implies $e(G)>\frac{1}{2}n^2-(b+2)n+\frac{1}{8}(2 b+3)  (2 a + 2 b + 2 r+5)$.$\hfill$ $\quad\Box$

 Again by the maximality of $\lambda(G)$ and Lemma \ref{le:2}, we can deduce that $G[V(G)\setminus (I\cup T)] \cong K_{n-r-t}$, $G[S,T] \cong K_{s,t}$ and $d_G(v)=n-r$ for $v\in I$. Let $W = V(G)\setminus(I\cup S \cup T).$ By Lemma \ref{le:4}, we have
\begin{equation}\label{eq:2.4}
	e(G)\leq at-bs-1 + st + \frac{(n -r- t)(n -r- t - 1)}{2}+r(n-r).
\end{equation}

{\bf Claim 2.} $t=b+1$.

Note that $t\ge b+1$ and $b\ge a\ge r+2$. Assume $t\ge b+2$. We consider the following cases according to the value of $t$.

{\bf Case 1.} $b+2\le t\le a+b-1.$

By Cliam 1 and (\ref{eq:2.4}), we obtain
{\footnotesize\begin{align*}
	0 &> \frac{1}{2}n^2-(b+2)n+\frac{1}{8}(2 b+3)  (2 a + 2 b + 2 r+5) - e(G) \\
	&\ge (t - b- \frac{3}{2})n -\frac{t^2}{2}-(a+s+r+\frac{1}{2})t+\frac{4 (b^2 + r^2 + b ( r + 2 s+4))+ 2 r+ 2a ( 2b+3) +23}{8}
	 \quad \text{(by (\ref{eq:2.4}))} \\
	&\ge (t - b- \frac{3}{2})n
	-\frac{3}{2} t^{2} + (  b -a - r + \frac{1}{2}) t + \frac{4\bigl( b^{2} + r^{2} + b(r + 2) \bigr)+2(2b+3)a + 2r +23}{8} 
	\quad \text{(since $s\le t-1$)} \\
	&\ge \frac{n}{2} - \frac{20 a^2 + 4a (2r+5b-13) + 4 b (r-7) - 2 r ( 2 r+5)-7}{8} \quad \text{(since $b+2\le t\le a+b-1$)} \\
	&>0 \quad \text{(since $n\ge 2(4b+a+r+2)(b+r+2)$),}
\end{align*}}
a contradiction.

{\bf Case 2.} $a+b\le t<2(4b+a+r+2).$

Since $n\ge 2(4b+a+r+2)(b+r+2)$, we obtain $n\ge(b+r+2)t+1$.
{\footnotesize\begin{align*}
	0 &> \frac{1}{2}n^2-(b+2)n+\frac{1}{8}(2 b+3)  (2 a + 2 b + 2 r+5) - e(G) \\
	&\ge (t - b- \frac{3}{2})n -\frac{t^2}{2}-(a+s+r+\frac{1}{2})t\\
	&+\frac{4 (b^2 + r^2 + b ( r + 2 s+4))+ 2 r+ 2a ( 2b+3) +23}{8}
	\quad \text{(by (\ref{eq:2.4}))} \\
	&\ge ( b + r+\frac{3}{2} ) t^{2} - \left( a+ \frac{1}{2}(2b + 5)(b + r + 1) + s \right) t\\
	& + \frac{4\Bigl( b^{2} + r^{2} + b(r + 2s + 2) \Bigr) + 2(2b + 3) + 2r + 11 }{8}  \quad \text{(since $n\ge(b+r+2)t+1$)}\\
	&\ge ( b + r+\frac{1}{2} ) t^{2} - \left(  \frac{1}{2}(2b + 5)(b + r + 1)+a-b-1 \right) t \\
	&+ \frac{ 4( b^{2} + br + r^{2} )+ 2a(2b + 3) + 2r+11}{8}  \quad \text{(since $s\le t-1$)}\\
	&>\left( (b + r+\frac{1}{2} ) t -   \frac{1}{2}(2b + 5)(b + r + 1) +1\right) t \quad \text{(since $b\ge a$)}.
\end{align*}}
Let $f(t)=(b + r+\frac{1}{2} ) t -   \frac{1}{2}(2b + 5)(b + r + 1) +1$. Since $t\ge a+b$, we have
$f(t)\ge f(a+b)=(a-3) b + \frac{1}{2}(a + 2 a r - 5 (1 + r))+1$. Now we prove that $f(a+b)>0$. If $a\ge 4$, then obviously $f(a+b)>0.$ If $a=3$, since $a\ge r+2$, we obtain $r=1$. Hence $f(3+b)=-\frac{1}{2}+1>0.$ Hence
$f(t)>f(a+b)>0$, which implies
$0 > \frac{1}{2}n^2-(b+2)n+\frac{1}{8}(2 b+3)  (2 a + 2 b + 2 r+5) - e(G)>f(t)t>0,$
a contradiction.

{\bf Case 3.} $t\ge2(4b+a+r+2)$

 By integrating (\ref{eq:2.4}), $\delta(G) \geq a+r$, Lemmas \ref{le:7} and \ref{le:8}, we have
{\small\begin{align*}
	\lambda(G) &\leq \frac{\delta(G) - 1}{2} + \sqrt{2e(G) - n\delta(G) + \frac{(\delta(G) + 1)^2}{4}} \\
	&\leq \frac{a+r-1}{2}\\
	& + \sqrt{2\left(at-bs-1 + st + \binom{n -r- t}{2}+r(n-r)\right) - n(a+r) + \frac{(a+r+1)^2}{4}} \\
	&=\frac{a+r-1}{2}+\sqrt{\left(n-b-\frac{a+r+4}{2}\right)^2 - f(n)}\\
\end{align*}}
where $f(n)=(2t-2b-3)n-t^2-(2(s+a+r)+1)t+b^2+b(2s+r+4)+ab+\frac{3}{2}a+ r^2+\frac{r}{2}+\frac{23}{4}.$ Now we prove that $f(n)>0$.
{\small \begin{align*}
	f(n)&=(2t-2b-3)n-t^2-(2(s+a+r)+1)t+b^2+b(2s+r+4)+ab+\frac{3}{2}a+ r^2+\frac{r}{2}+\frac{23}{4} \\
	&\ge t^2 - 2 ( a + b+2) t- 3 s + r^2+b(b-r+4)+ab+\frac{3}{2}a -\frac{5r}{2}+\frac{23}{4} \quad \text{(since $n\ge s+t+r$)}\\
	&\ge t^2 - (2  a + 2b+7) t + r^2+b(b-r+4)+ab+\frac{3}{2}a -\frac{5r}{2}+\frac{35}{4} \quad \text{(since $s\le t-1$)}\\
	&\ge 5 r^2+ ( 27 b+4a-\frac{1}{2}) r+ 
	b (49 b+4)  + a ( 13 b-\frac{9}{2})-\frac{13}{4}>0
  \quad \text{(since $t\ge 2(4b+a+r+2)$)}.
\end{align*}}
Hence $\lambda(G)\le\frac{a+r-1}{2}+\sqrt{\left(n-b-\frac{a+r+4}{2}\right)^2 - f(n)}<n-b-\frac{5}{2}$, which leads to a contradition with (\ref{eq:2.3}).$\hfill$ $\quad\Box$

Since $t=b+1$, by Lemma \ref{le:1}, we have $s\le t-1=b$.  Let $\mathbf{x}$ be the Perron vector of $A(G)$. Let 
$W = V(G)\setminus(S \cup I\cup T) = \{w_1, w_2, \ldots, w_{n-s-r-b-1}\}.$ Without loss of generality, we suppose that $x(w_1) \geq x(w_2) \geq \cdots \geq x(w_{n-s-r-b-1})$.

{\bf Claim 3.} $e(T)=0.$

 Otherwise,  there exists $uv \in E(T)$.  Since $n\ge 2(4b+a+r+2)(b+r+2)$, $t= b+1$ and $s\le t-1=b$, we have $|W|=n-s-r-t >a(b+1)-b-1$. Then we have
$d_W(u) \leq \sum_{v \in T} d_{G-I-S}(v) \leq at-bs-1\le a(b+1)-b-1<|W|.$ Hence, there exists a vertex $w_i \in W$ such that $uw_i \notin E(G)$. Suppose that $c \in T$ with $x(c) = \max\{x(v) \mid v \in T\}$. Let $d_T(c) = l$. Since $c \in T$, we have $d_{G-I-S}(c) \leq a-1$ due to $T = \{v \in V(G) \setminus (S\cup I) \mid d_{G-I-S}(v) \leq a-1\}$.

By $\lambda(G)\mathbf{x} = A(G)\mathbf{x}$, we obtain
\begin{align*}
	\lambda(G)x(c)&=\sum_{v\in S}x(v)+\sum_{v\in I}x(v)+\sum_{v\in N_W(c)}x(v)+\sum_{v\in N_T(c)}x(v)\\
	&\le \sum_{v \in S} x(v)+\sum_{v\in I}x(v)+\sum_{1 \leq i \leq a-1-l} x(w_i) + lx(c),
\end{align*}
\begin{align*}
	\lambda(G)x(w_{n-s-b-1})&=\sum_{v\in S}x(v)+\sum_{v\in I}x(v)+\sum_{v\in N_W(w_{n-s-b-1})}x(v)+\sum_{v\in N_T(w_{n-s-b-1})}x(v)\\
	&\ge \sum_{v \in S} x(v)+\sum_{v\in I}x(v)+\sum_{1 \leq i \leq a-1-l} x(w_i) + (n-s-b-1-a+l)x(w_{n-s-b-1}).
\end{align*}

Since $n\ge 2(4b+a+r+2)(b+r+2)$ and $s\le b$, we have	
$$\left(\lambda(G)-l)(x(w_{n-s-b-1}) - x(c)\right) \geq (n-s-b-1-a)x(w_{n-s-b-1}) > 0.$$

Note that $l=d_T(c)\le d_{G-I-S}(c)\le a-1$, by (\ref{eq:2.3}), we have $\lambda(G) > n-b-\frac{5}{2} > a-1 \geq l$. Hence $x(w_{n-s-b-1}) > x(c)$. Since  $x(w_i) \geq x(w_{n-s-b-1})$ and $x(c) \geq x(v)$, we have $x(w_i) > x(v)$. Let $G' = G-uv+uw_i$. Since
$\sum_{v \in T} d_{G'-I-S}(v) = \sum_{v \in T} d_{G-I-S}(v) - 1 <at - bs-1,$
by Corollary \ref{co1} and Lemma \ref{le:5}, we deduce that $G'$ is not fractional $ID$-$[a,b]$-factor critical and $\lambda(G') > \lambda(G)$, which contradicts the maximality of $\lambda(G)$. Hence $e(T)=0.$ $\hfill$ $\quad\Box$

{\bf Claim 4.} $s =a$.

 If $s \geq a + 1$, for any $v\in T$, since $t=b+1$ and $b\ge a$, we obtain
$$0 \leq \sum_{v \in T} d_{G-I-S}(v) \leq at-bs-1\leq a(b+1)-b(a+1)-1=a-b-1 < 0,$$
 a contradiction.

If $s \leq a- 1$, since $d_G(v) \geq \delta(G) \geq a+r$ and $e(T)=0$ due to Cliam 3, then $d_{G-I-S}(v) = d_W(v) \geq a+r-r - s=a-s$ for $v \in T$. Since $W = \{w_1, w_2, \ldots, w_{n-s-r-b-1}\}$ with $x(w_1) \geq x(w_2) \geq \cdots \geq x(w_{n-s-r-b-1})$, by Lemma \ref{le:5} and the maximality of $\lambda(G)$, we obtain $\{w_1, w_2, \ldots, w_{a-s}\} \subseteq N_G(v)$ for any $v \in T$.

Let $S' = S \cup \{w_1, w_2, \ldots, w_{a-s}\}$ . Then $|S'|=s+a-s=a.$

Since $s \leq a-1$, we have 
$$\sum_{v \in T} d_{G-I-S'}(v) = \sum_{v \in T} d_{G-I-S}(v) - (a- s)(b + 1) \leq s-1 \le a-2,$$
$$at-b|S'|-1=a(b+1)-ba-1=a-1. $$
Hence $|S'|>s$ also satisfies $\sum_{v \in T} d_{G-I-S'}(v)<at-b|S'|-1,$
 which contradicts the maximality of $s$. Thus $s = a$.$\hfill$ $\quad\Box$
 
  Combining this with $t = b + 1$ and $\sum_{v \in T} d_{G-I-S}(v) \le at-bs-1=a-1 ,$ by the maximality of $\lambda(G)$, we have $\sum_{v \in T} d_{G-I-S}(v)=a-1 $. Hence $G\in \mathscr{F}^{a,b,r}_n$. Again by the maximality of $\lambda(G)$ and Lemma \ref{le:2.2}, we get $G \cong F^{a,b,r}_n$, as required. 
  
  This completes the proof of Theorem \ref{T2}.
\end{proof}

\section{Proof of Theorem \ref{T3}}

In this section, we prove Theorem \ref{T3}, which characterizes the size condition for a graph to be fractional $ID$-$[a,b]$-factor critical.

\begin{proof}[\bf Proof of Theorem~\ref{T2}]  %
Suppose that $G$ is not fractional $ID$-$[a,b]$-factor critical, where $b\ge a\ge 1$ and $r\ge1$. By Corollary \ref{co1}, there exist disjoint subsets $S, T$ and independent set $I$ of $V (G)$ with $|S| = s$, $|T|=t$ and $|I|=r$ such that
\begin{equation}\label{eq:3.1}
	\sum_{v \in T} d_{G-I-S}(v) \leq at - bs  - 1.
\end{equation}
 By Lemma \ref{le:1}, we have 
\begin{equation}\label{eq:3.2}
	1 \leq s \leq t  - 1, t \geq b + 1.
\end{equation}

By Lemma \ref{le:4}, we obtain 
	\begin{equation}\label{eq:3.3}
		\begin{aligned}
			e(G)&\le at-bs-1 + st + \frac{(n -r- t)(n -r- t - 1)}{2}+r(n-r)\\
			&= \binom{n-r-b-1}{2}+r(n-r)+a(b+1)+a-y(t),
		\end{aligned}
	\end{equation}
where $y(t) = -\frac{t^2}{2} + \left(n - a-r- s - \frac{1}{2}\right)t + \frac{b^2}{2}+ab+r-(b+1)n + \frac{1}{2}b(2r+2s+3) +2a + 2.$
	
{\bf Claim 1.} $y(t)>0$.
	
	{\bf Case 1.} $b+1\le t\le \frac{n}{2}.$

	{\bf Subcase 1.1.} $t=b+1$.
	
In this case, we obtain $y(t)=y(b+1)=a+1-s$. We prove that $s \le a$. Otherwise, $s \ge a+1$. By (\ref{eq:3.1}) and $b\ge a$, we have
	$$
	0 \leq \sum_{v \in T} d_{G-I-S}(v) \leq at-bs-1\le a(b+1)-b(a+1)-1<a-b-1<0,
	$$
	a contradiction. Hence $s \le a$, which implies $y(t)=y(b+1)=a+1-s>0.$
	
	{\bf Subcase 1.2.} $b+2\le t\le \frac{n}{2}.$
	
By (\ref{eq:3.2}),	we have $s\le t-1$ and $\frac{\partial f}{\partial s}=-t+b<0.$ By direct calculation,
	\begin{align*}
		y(t) &\geq - \frac{3}{2} t^{2} + \left(   b + n  -a-r + \frac{1}{2} \right) t  + a(b + 2) - (b + 1)n + r + \frac{1}{2}b (b+ 2r + 1)+ 2=:q(t).
	\end{align*}
	Since $n \geq 4(a+b+r+1)$ and $b \ge a \geq 1$, we obtain 
	$$
	q(b + 2) = n-3b-r-3 > 0
	$$
	and
	\begin{align*}
		q\left(\frac{n}{2}\right) &= \frac{n^2}{8} -\frac{n}{4} (2a + 2b + 2r + 3) + a(b + 2) + r + \frac{1}{2} b(b + 2r + 1)+ 2 \\
		&\geq   \frac{1}{2} b(b + 2r - 1) + a(b + 1) +1 >0
	\end{align*}
	Hence
	$
	q(t) \geq \min\left\{q(b + 2), q\left(\frac{n}{2}\right)\right\} > 0.$ 
	
	Hence $y(t) \geq q(t) > 0$ for $b + 1 \leq t \leq \frac{n}{2}$.
	
	{\bf Case 2.} $t\ge \frac{n+1}{2}.$
	
	Since $n \geq s + t+r$. Then $s \leq n - t-r$. Since $t \geq \frac{n+1}{2}$, we get
	\begin{align*}
		y(t) &\geq \frac{t^2}{2} -\left( a + b + \frac{1}{2} \right) t + a(b + 2) + \frac{1}{2} b(b + 3) - n + r  +  2 \\
		&\geq \frac{n^2}{8} - \frac{1}{2}(a + b + 2)n  + b + \frac{b^2}{2} + (b + \frac{3}{2})a + r +  \frac{15}{8}  \quad \text{(since $t \geq \frac{n+1}{2}$)} \\
		&\geq   \frac{b^2}{2} + (b + \frac{3}{2}) a + b+ r+\frac{15}{8}>0 \quad \text{(since $n \geq 4(a+b+r+1)\ge 4(a+b+2)$)} .
	\end{align*}  
	Hence $y(t)>0$ for $t\ge b+1$.  $\quad\Box$
	
According to (\ref{eq:3.3}), we have $e(G) < \binom{n-r-b-1}{2}+r(n-r)+a(b+1)+a$, which leads to a contradiction. 

This completes the proof of Theorem \ref{T3}.
\end{proof}

\section*{Declarations}

The authors declare that they have no conflict of interest.

\section*{Data availability}

No data was used for the research described in the paper.












\begin{thebibliography}{99}

\bibitem{AK} J. Akiyama, M. Kano, Factors and Factorizations of Graphs: Proof Techniques in Factor Theory, vol. 2031, Springer, 2011.

\bibitem{A} R.P. Anstee, Simplified existence theorems for $(g, f )$-factor, Discrete Appl. Math. 27 (1990) 29–38.

\bibitem{BA} R.B. Bapat, Graphs and Matrices, Springer, London, 2014.

\bibitem{BM} J.A. Bondy, U.S.R. Murty, Graph Theory with Applications, Macmillan, London, 1976.

\bibitem{BH} A. Brouwer, W. Haemers, Eigenvalues and perfect matchings, Linear Algebra Appl. 395 (2005) 155–162.

\bibitem{B} A. Brouwer, W. Haemers, Spectra of Graphs, Springer, Berlin, 2011.

\bibitem{C} S. Cioab\v{a}, Perfect matchings, eigenvalues and expansion, C. R. Math. Acad. Sci. Soc. R. Can. 27 (4) (2005) 101–104.

\bibitem{CG} S. Cioab\v{a} D. Gregory, Large matchings from eigenvalues, Linear Algebra Appl. 422 (1) (2007) 308–317.

\bibitem{CGH} S. Cioab\v{a}, D. Gregory, W. Haemers, Matchings in regular graphs from eigenvalues, J. Combin. Theory, Ser. B 99 (2) (2009) 287–297.

\bibitem{FLA}	A. Fan, R. Liu, G. Ao, Spectral radius, fractional $[a, b]$-factor and $ID$-factor-critical graphs, Discrete Math. 347 (2024) 113976.

\bibitem{FL} D. Fan, H. Lin, Binding number, $k$-factor and spectral radius of graphs, Electron. J. Combin. 31 (2024) $\#$P1.30.

\bibitem{FLL} D. Fan, H. Lin, H. Lu, Spectral radius and $[a, b]$-factors in graphs, Discrete Math. 345 (7) (2022) 112892.

\bibitem{FLLO} D. Fan, H. Lin, H. Lu, S. O, Eigenvalues and factors: a survey, Linear Algebra Appl. 743 (2026) 1-55.

\bibitem{FLZ} D. Fan, H. Lin, Y. Zhu, A note on the spectral radius and $[a, b]$-factor of graphs, Discrete
Math. 349 (2026) 115103.

\bibitem{G} X. Gu, Regular factors and eigenvalues of regular graphs, European J. Combin. 42 (2014) 15–25.

\bibitem{HL} Y. Hao, S. Li, Tur\'{a}n-Type problems on $[a,b]$-factors of graphs, and beyond, Electron. J. Combin. 32 (2024) $\#$P3.23.

\bibitem{H} Y. Hong, A bound on the spectral radius of graphs. Linear Algebra Appl. 108 (1988) 135–139.

\bibitem{HSF} Y. Hong, J. Shu, K. Fang, A sharp upper bound of the spectral radius of graphs, J. Combin. Theory, Ser. B 81 (2001) 177–183.

\bibitem{JFL} H. Jia, A. Fan, R. Liu, Spectral radius, the matching number and
fractional criticality of graphs, Linear Algebra Appl. 732 (2026) 1-17.

\bibitem{LFZ} Y. Li, D. Fan, Y. Zhu, Spectral radius and fractional $[a, b]$-factor of graphs, Linear Algebra Appl. 715 (2025) 32–45.

\bibitem{LLT} H. Liu, M. Lu, F. Tian, On the spectral radius of graphs with cut edges, Linear Algebra Appl. 389
(2004) 139–145.

\bibitem{L2} H. Lu, Regular graphs, eigenvalues and regular factors, J. Graph Theory 69 (4) (2012) 349–355.

\bibitem{MW} T. Ma, L. Wang, Distance spectral conditions for $ID$-factor-criticality and fractional
$[a, b]$-factor of graphs, Discrete Math. 349 (2026) 114803.

\bibitem{N} V. Nikiforov, Some inequalities for the largest eigenvalue of a graph, Combin. Probab. Comput. 11 (2)
(2002) 179–189.

\bibitem{O} S. O, Spectral radius and fractional matchings in graphs, European J. Combin. 55 (2016)
144–148.

\bibitem{O3} S. O, Spectral radius and matchings in graphs, Linear Algebra Appl. 614 (2021) 316-324.

\bibitem{O2} S. O, Eigenvalues and $[a, b]$-factors in regular graphs, J. Graph Theory 100 (3) (2022) 458–469.

\bibitem{TFZ} B.S. Tam, Y. Fan, J. Zhou, Unoriented Laplacian maximizing graphs are degree maximal, Linear Algebra Appl. 429 (2008) 735-758.

\bibitem{YL} Q. Yu, G. Liu, Graph Factors and Matching Extensions, Springer, 2009.

\bibitem{YH} Y. Yuan, H.X. Hao, A neighborhood union condition for fractional $ID$-$[a, b]$-factor-critical graphs,
Acta Math. Appl. Sin. Engl. Ser. 34 (2018) 775-781.

\bibitem{ZSL} S. Zhou, Z. Sun, H. Liu, A minimum degree condition for fractional $ID$-$[a, b]$-factor critical
graphs, Bull. Aust. Math. Soc. 86 (2012) 177-183.

\bibitem{ZY} S. Zhou, F. Yang, A neighborhood condition for fractional $ID$-$[a, b]$-factor-critical graphs, Discuss.
Math., Graph Theory 58 (2016) 409-418.

\end{thebibliography}
\end{document}